\newtheorem{theorem}{Theorem}[section]
\newtheorem{lemma}[theorem]{Lemma}
\newtheorem{proposition}{Proposition}
\theoremstyle{definition}
\newtheorem{remark}{Remark}
\title[Normal Boundary Control of Stokes Equations] 
      {Null Controllability of the Incompressible Stokes Equations in a 2-D Channel Using Normal Boundary Control}
\author[Shirshendu Chowdhury and Debanjana Mitra and Michael Renardy]{}
\subjclass{Primary: 93C20, 93B05; Secondary: 76D05.}
 \keywords{Incompressible Stokes equations,  Null controllability,
Dirichlet boundary control.}
\email{shirshendu@iiserkol.ac.in}
 \email{deban@math.iitb.ac.in}
 \email{mrenardy@math.vt.edu}
\thanks{Shirshendu Chowdhury acknowledges financial support from an INSPIRE Fellowship. 
Michael Renardy and Debanjana Mitra acknowledge support from 
the National Science Foundation under
Grant DMS-1514576.}
\thanks{$^*$ Corresponding author: Michael Renardy}
\begin{document}
\maketitle

\centerline{\scshape Shirshendu Chowdhury}
\medskip
{\footnotesize
 \centerline{Indian Institute of Science Education and Research}
   \centerline{Kolkata, West Bengal, India}
} 

\medskip

\centerline{\scshape Debanjana Mitra}
\medskip
{\footnotesize
 \centerline{Department of Mathematics}
 \centerline{Indian Institute of Technology, Bombay}
 \centerline{Powai, Mumbai, Maharashtra 400076, India}
} 

\medskip
\centerline{\scshape Michael Renardy$^*$}
\medskip
{\footnotesize
 \centerline{Department of Mathematics}
   \centerline{Virginia Tech}
   \centerline{Blacksburg, VA 24061-0123, USA}
}

\bigskip

 \centerline{(Communicated by the associate editor name)}

\newcommand{\R}        {\mathbb {R}}
\newcommand{\C}        {\mathbb {C}}
\newcommand{\h}        {\mathcal {H}}
\newcommand{\Z}        {\mathbb {Z}}
\newcommand{\I}        {\infty}
\newcommand{\<}        {\leq}
\newcommand{\lap}        {\triangle}
\newcommand{\grad}        {\nabla}
\newtheorem{defin}[theorem]{Definition}
\def\N{{\mathbb N}}
\newcommand{\notes}[1]{{\color{red} {\tt \footnotesize #1}}}
\newcommand{\qtq}[1]{\qquad \textrm{#1} \qquad}
\newcommand{\be} {\begin{equation}}
\newcommand{\ee} {\end{equation}}
\newcommand{\bea} {\begin{eqnarray}}
\newcommand{\eea} {\end{eqnarray}}
\newcommand{\Bea} {\begin{eqnarray*}}
\newcommand{\Eea} {\end{eqnarray*}}
\newcommand{\pa} {\partial}
\newcommand{\ov} {\over}
\newcommand{\al} {\alpha}
\newcommand{\ba} {\beta}
\newcommand{\de} {\delta}
\newcommand{\ga} {\gamma}
\newcommand{\Ga} {\Gamma}
\newcommand{\Om} {\Omega}
\newcommand{\om} {\omega}
\newcommand{\De} {\Delta}
\newcommand{\la} {\lambda}
\newcommand{\si} {\sigma}
\newcommand{\Si} {\Sigma}
\newcommand{\La} {\Lambda}
\newcommand{\Th} {\Theta}
\newcommand{\noi} {\noindent}
\newcommand{\lab} {\label}
\newcommand{\na} {\nabla}
\newcommand{\va} {\varphi}
\newcommand{\var} {\varepsilon}
\numberwithin{equation}{section}
\newtheorem{claim}[theorem]{Claim}

\begin{abstract}

In this paper, we consider the Stokes equations in a two-dimen\-sional channel with periodic
conditions in the direction of the channel. We establish null controllability of this system 
 using a boundary control which 
 acts on the normal component of the velocity only.
We show null 
controllability of the system, subject to a constraint of zero average,
by proving an observability inequality with the help of 
a M\"{u}ntz-Sz\'{a}sz Theorem.

\end{abstract}

\section{Introduction}\label{sec1}

We consider a viscous incompressible fluid flow in a two dimensional periodic channel,
defined by $(x,y) \in (-\infty,\infty)\times[0, 1]$, with the walls located at $y = 0$ and $y = 1$. So the boundary
of the channel is split into two parts, namely the upper and the lower part.
The Navier-Stokes equations for a viscous incompressible fluid for  $(x,y)\in (-\infty,\infty)\times (0,1) \subset \R^2 $  and $t\in (0,T)$
are
\begin{eqnarray}\label{eq:2D}
\frac{\partial U}{\partial t}(x,y,t)+(U(x,y,t).\grad)U(t,x,y)-\nu\lap U(x,y,t)\, = \, \grad p(x,y,t),\nonumber\\
 \quad\quad\quad \forall\, (x,y,t)\in (-\infty,\infty)\times (0,1)\times (0,T),\nonumber\\
\mbox{div}\ U(x,y,t)\, = \, 0, \quad \forall\, (x,y,t)\in (-\infty,\infty)\times (0,1)\times (0,T), \nonumber\\
U(x,0,t)=(0,0)\ ,\ U(x,1,t)=(0,\psi(x,t)), \quad  \forall \, (x,t)\in (-\infty,\infty)\times (0,T), \\
U(x+L,y,t) \, = \, U(x,y,t), \quad p(x+L,y,t) \, = \, p(x,y,t),\nonumber\\ \quad \forall\, (x,y,t)\in (-\infty,\infty)\times (0,1)\times (0,T),\nonumber\\
U(x,y,0)=(u_0(x,y),\ v_0(x,y)), \ \ \forall\ (x,y)\in (-\infty,\infty)\times(0,1),\nonumber
\end{eqnarray}
where $U(x,y,t)=(u(x,y,t),v(x,y,t))$ denotes the velocity of
the fluid in $\R^2$, $p(x,y,t)$ denotes the pressure and $\psi$ is the boundary control. The viscosity coefficient $\nu$
is assumed to be a positive constant and $L$ is any positive number.
A simple stationary 
solution (with $\psi=0$) of system \eqref{eq:2D} is given by $(U,p) = (0,0,0)$, corresponding to a fluid at rest.

We assume that both the velocity field $U= (u,v)$ and the pressure $p$ are $L$ periodic in the
first spatial coordinate $x$ .

Here we consider the following linearized system of \eqref{eq:2D}.
\begin{eqnarray}\label{eq:1.2} 
\frac{\partial U}{\partial
t}-\nu\lap U &=&\grad p\quad \mathrm{in}\quad (-\infty,\infty)\times (0,1)\times (0,T),\nonumber\\
\mbox{div}\ U&=&0 \quad \mathrm{in}\quad (-\infty,\infty)\times (0,1)\times (0,T),\nonumber\\
U(x,0,t)&=&(0,0) \quad \forall \, (x,t)\in (-\infty,\infty)\times (0,T),\nonumber\\
U(x,1,t)&=&(0,\psi(x,t)), \quad \forall \, (x,t)\in  (-\infty,\infty)\times (0,T),\\
U(x+L,y,t)&=&U(x,y,t),\quad \forall\, (x,y,t)\in (-\infty,\infty)\times (0,1)\times (0,T),\nonumber\\
p(x+L,y,t)&=&p(x,y,t),\quad \forall\, (x,y,t)\in (-\infty,\infty)\times (0,1)\times (0,T),\nonumber\\
U(x,y,0)&=&U_0(x,y),\quad \forall \, (x,y)\in (-\infty,\infty)\times (0,1)\times (0,T).\nonumber
\end{eqnarray}
\begin{defin}
The system \eqref{eq:1.2}  is null controllable in a Hilbert space $Z$ at time $T > 0$, if for
any initial condition $U_0 \in Z$, there exists a control $\psi$ such that the
solution $U$ of  \eqref{eq:1.2} with control $\psi$ hits 0 at time T , i.e.
$U(T) = 0.$
\end{defin} 
Our goal in this paper is to study the null controllability of the  
linearized system \eqref{eq:1.2} by using a control $\psi$ acting only in the 
normal direction  on the upper part of boundary. 
Due to the incompressibility condition  div $U = 0$, 
we necessarily have
\begin{equation}\label{eqcontrl+avg}
	\int_{0}^{L} \psi(x,t) \, dx = 0,\ \forall t\in (0,T).
\end{equation}

The main obstacle to null controllability using only one control acting in the normal direction of the boundary is as follows.  
In \eqref{eq:1.2}, we denote $U(x,y,0)=U_0(x,y)=(u_0,v_0)$ and $U=(u,v)$.
Taking a dot product between $\eqref{eq:1.2}_1$ and $(\sin(n\pi y),0)$ and then using an integration by parts on $(0,L)\times (0,1)$
and the condition (\ref{eqcontrl+avg}),
we get 
$${d\over dt}\int_0^L\int_0^1 u(x,y,t)\sin(n\pi y)\,dy\,dx=-\nu n^2\pi^2\int_0^L\int_0^1 u(x,y,t)\sin(n\pi y)\,dy\,dx.$$
Consequently,
$$\int_{0}^{L}\int_{0}^{1}u(x,y,T)\sin (n\pi y)\,dy\,dx=e^{-\nu(n\pi)^2 T}\int_{0}^{L}\int_{0}^{1}u_0(x,y)\sin (n\pi y)\,dy\,dx.$$ 
Thus $U(x,y,T)=(0,0)$ implies that the initial condition $U_0(x,y)$ has to satisfy 
\begin{equation}
\int_{0}^{L}\int_{0}^{1} u_{0}(x,y)\sin (n\pi y)\,dy\,dx=0, \quad \forall \, n\in \N.
\label{remintromiss}
\end{equation}
So there are infinitely many directions, namely $(\sin (l\pi y),0),l\in \N$,
which are not null controllable by the control acting in the normal direction of the boundary. 
Note that the subspace spanned by these directions is 
infinite-dimensional. The question then arises whether the system is null controllable in the orthogonal complement
of this subspace, and this paper will answer this question affirmatively.

In view of this discussion and \eqref{eqcontrl+avg}, we study the null controllability of the 
linearized system by using the control acting in the normal direction of the boundary
with appropriate constraints on the control and initial condition,
i.e. 
\begin{eqnarray}\label{eq:LS}
\frac{\partial U}{\partial
t}-\nu\lap U \,=\,\grad p \quad \mathrm{in}\quad (-\infty,\infty)\times (0,1)\times (0,T)\nonumber\\
\mbox{div}\ U \, = \,0 \quad \mathrm{in}\quad (-\infty,\infty)\times (0,1)\times (0,T),\nonumber\\
U(x,0,t)\,=\, (0,0), \quad \forall\, (x,t)\in (-\infty,\infty)\times (0,T),\nonumber\\
U(x,1,t)\,=\,(0,\psi(x,t)), \quad \forall\, (x,t)\in (-\infty,\infty)\times (0,T),\nonumber\\
\quad \int_0^L \psi(x,t)\, dx =0, \quad \forall \, t\in (0,T),\\
U(x+L,y,t)\,= \,U(x,y,t), \quad \forall\, (x,y,t)\in (-\infty,\infty)\times (0,1)\times (0,T), \nonumber\\
p(x+L,y,t)\,=\,p(x,y,t),\quad \forall\, (x,y,t)\in (-\infty,\infty)\times (0,1)\times (0,T),\nonumber\\
U(x,y,0)\,=\,U_0(x,y)=(u_0(x,y),v_0(x,y)), \quad \forall\, (x,y)\in (-\infty,\infty)\times (0,1),\nonumber\\ 
\quad \int_0^{L} u_0(x,y)\, dx = 0, \quad \forall\; y\in (0,1). \nonumber
\end{eqnarray}

Using spectral methods we prove null controllability for
$U_0$ belonging to  $\mathbf{V}_{\sharp,n}^0(\Omega)$ with 
$\displaystyle \int_0^L u_0(x,y)\,dx= 0$, for all $y\in (0,1)$,
where $ \mathbf{V}_{\sharp,n}^0(\Omega)$ denotes
space of 
 $L$-periodic divergence free $\mathbf{L}^2$ vector functions which have normal trace zero.
(For details of the function spaces, see Section 2.1). This is the main result (see Theorem \ref{thnull} in Section \ref{sec3}) of this paper. 

As far as we know there are no prior null controllability results using one normal boundary control for 
the Stokes system \eqref{eq:1.2}.

The proof of the null controllability result relies on an observability inequality (see Section 3) 
for the solutions of the adjoint system and the spectral analysis of the linearized operator.
The spectrum of the Stokes operator lies on the left side of the complex plane. It
consists of a family of real eigenvalues, which diverges to $-\infty$. 
Moreover,
explicit expressions of eigenvalues and
eigenfunctions in terms of a Fourier basis are obtained. This helps to split the  
observability inequality into observability 
inequalities corresponding to each Fourier mode of the adjoint system.
The observability inequality for each mode will be established using a parabolic type of Ingham inequality.  The proof that the
observability inequality (Lemma \ref{lemobsk}) for the $k$th Fourier mode holds  with a positive constant $C_T$, independent of $k$,
is the key result in this work.

The stabilization of the incompressible Navier-Stokes system in a 2-D channel (with periodic
conditions along the $x$ axis) linearized around a steady-state parabolic laminar flow profile $(L(y), 0)$
has been studied in Munteanu \cite{Munteanu}
and Barbu \cite{Barbu1}. In particular,
Munteanu in \cite{Munteanu} proved that the linearized system of (1.1) around $(L(y)=C(y^2-y), 0)$ is exponentially
stabilizable with some decay rate $\omega, 0 < \omega \leq \nu\pi^2$ by a normal boundary, finite-dimensional
feedback controller acting on the upper wall $\Gamma_1 (y = 1)$ only. A similar stability result for this
linearized system when the normal velocity is controlled on the both walls $\Gamma_0 (y = 0)$ and $\Gamma_1$
(y= 1) of the channel is proved by Barbu in \cite{Barbu1}. In \cite{Barbu0}, Barbu established that the exponential
stability of the linearized system around $(L(y), 0)$ can be achieved using a
finite number of Fourier modes and a boundary feedback stochastic controller which acts on
the normal component of velocity only. In the present paper (see Section 2.2), we also notice 
that the incompressible Navier-Stokes system (1.1) linearized around the origin (0, 0) is stabilizable
using only a normal boundary control, with any decay rate $\omega$ such that $0 < \omega \leq \nu\pi^2$. Thus we know that
the exponential decay rate can be at most $\nu \pi^2$ for the linearized system. (See also equation (96)
in Section 9 in \cite{Munteanu} or Section 2 in \cite{chowdhury}). A similar situation occurs also in Triggiani \cite{Tri} for a linearized system where
homogeneous boundary conditions on the tangential component $u$ of the velocity are of Neumann
type.

The boundary stabilization of Navier-Stokes equations, with tangential controllers or normal
controllers was studied in two dimensions, for example by Barbu \cite{Barbu2,Barbu1}, Munteanu \cite{Munteanu2}, Coron \cite{Vaz1},
Krstic \cite{Vaz2},\cite{Krstic1}, \cite{Krstic2}, Raymond \cite{Raymond2}. In most of these papers, either there are
sufficiently many boundary controls so there are no missing directions, or stabilizability is proved but with no specific
decay rate (except in \cite{Raymond2}). In contrast,
in this work we are using only one boundary control (acting on the normal component of velocity) on the
upper part of the boundary and we are looking for null controllability instead of stabilizability. 

 In 
  \cite{chowdhury}, Chowdhury and 
 Ervedoza proved a local stabilization result for the viscous incompressible Navier-Stokes equations \eqref{eq:2D}
  at any exponential decay rate by a normal boundary control acting at the upper boundary.
 The linearized system around zero is exponentially stable with decay rate 
 $\nu\pi^2$ but not stabilizable at a higher decay rate. To overcome this difficulty, 
 the idea is to use the nonlinear term to stabilize the system in the directions which are not stabilizable 
 for the linearized equations. The argument is based on the power series expansion 
 method  introduced by 
 J.-M. Coron and E. Cr\'{e}peau in \cite{coron2} and described in \cite[Chapter 8]{coron1}.

Coron and Guerrero 
in \cite{coron5} consider the two-dimensional Navier-Stokes system in a torus. They establish the local
null controllability with internal controls having one vanishing component. Note that in their case also
the linearized control system around the origin is not null
controllable. In fact for the linearized system infinitely many missing directions are there corresponding to
$$
\lambda_1\sin\frac{2n\pi x}{L}+\lambda_2\cos\frac{2n\pi x}{L}, \quad n\in \Z, \quad \lambda_i\in \R, \quad i=1,2,\quad L>0,$$
like our case here for $sin(n\pi y)$.
But in \cite{coron5} the nonlinear term helps to get this null controllability using the return method. 
Coron and Lissy proved in \cite{coron6} local null controllability for the three-dimensional Navier-Stokes
equations on a smooth bounded domain of $\mathbb{R}^3$ using localized interior control with two vanishing
components. In this case also, the linearized system is not necessarily null controllable even 
if the control is distributed on the entire domain. They show
local null controllability using the return method together with a new algebraic method inspired
by the works of M. Gromov.
For our system also the study of null controllability for the nonlinear system \eqref{eq:2D} is an open question.
Moreover 
null controllability of the linearized and nonlinear system when 
control is localized is an interesting issue. These are interesting challenges for future research.

This paper is organized as follows. In Section \ref{sec2}, we introduce function spaces required for our analysis.
Then we study
the behavior of the spectrum of the linearized operator and well-posedness
of the linearized system \eqref{eq:1.2} and the corresponding adjoint system.
In Section \ref{sec3}, we split the observability inequality into  observability 
inequalities corresponding to each Fourier mode of the adjoint system. 
Thereafter we give the completion of the proof of the observability inequality using  a
M\"{u}ntz-Sz\'{a}sz Theorem for each mode and showing 
uniformity of the constant arising in the observability inequality.
In this fashion, null controllability (Theorem \ref{thnull}) is proved.

\section{Spectral analysis of the Stokes system} \label{sec2}
In this section we study the Stokes
system using its modal description.  In particular, we identify
the modes which are null controllable for the linearized system.

\subsection{Functional framework}
Let
$$\Omega = \left\{(x,y)\in \R^2 : 0 < y < 1\right\}$$
with boundary $\ \Gamma  = \Gamma_0 \cup \Gamma_1$ , where 
$$\Gamma_i=\{(x,y)\in \R^2: y=i\}, i= 0,1$$
and 
$$\ \Omega_L=(0,L)\times(0,1), \quad \mathrm{for}\, L>0.$$

Define $L_{\sharp}^2(\Omega)$
as
$$L_{\sharp}^2(\Omega) = \left\{f\in L_{loc}^2(\Omega) :  f|_{\Omega_{L}}\in L^2(\Omega_{L}), f (x + L, y) = f(x,y)\ \mbox{for a.e.}\ (x, y) \in \Omega\right\}.$$
 
 We also define the space
$$H_{\sharp}^1(\Omega)
 =\{f\in L_{\sharp}^2(\Omega) ;\quad  f|_{\Omega_{L}}\in H^1(\Omega_{L}), f(x,0)=f(x,1)=0,$$
 $$ f (0, y)=f (L, y)\ \mbox{in the trace sense} \}.$$

Let us denote the vector valued functional spaces:
$$ \mathbf{L}^2_\sharp(\Omega)= L^2_\sharp(\Omega)\times L^2_\sharp(\Omega),\quad 
\mathbf{H}^1_\sharp(\Omega)= H^1_\sharp(\Omega)\times H^1_\sharp(\Omega),$$
and $\mathbf{H}^{-1}_\sharp(\Omega)$ is the dual space of $\mathbf{H}^1_\sharp(\Omega)$.

 We now introduce the following spaces of divergence free vector fields:
 $$ \mathbf{V}_{\sharp,n}^0(\Omega)=\left\{U=(u,v) \in \mathbf{L}_{\sharp}^2(\Omega); \quad \mbox{div}\ U=0, \quad U.n=0\ \mbox{on}\ \Gamma\right\}, $$
(here the subscript $n$ indicates the vanishing of the normal component) and 
$$ \mathbf{V}_{\sharp}^1(\Omega)=\left\{U=(u,v) \in \mathbf{H}_{\sharp}^1(\Omega) ;\quad 
\mbox{div}\ U=0 \right\}.$$
We also denote the dual space of $ \mathbf{V}_{\sharp}^1(\Omega)$ by $\mathbf{V}_{\sharp}^{-1}(\Omega)$.

We also define the space of $L^2$ functions in $(0,L)$ having mean-value zero by $\dot{L}^2(0,L)$, i.e.,
$$ \dot{L}^2(0,L)= \{g \in L^2(0,L)\mid \int_0^L g(x)\,dx=0\}.$$

Let us denote by $P$ the Helmholtz orthogonal projection operator from $L_{\sharp}^2(\Omega)\times L_{\sharp}^2(\Omega)$ on to $\mathbf{V}_{\sharp,n}^0(\Omega)$, defined as

$$P(f)=f-\bigtriangledown q,$$
where $q$ is the weak solution of
$$\Delta q=\mbox{div} f, \quad 
{\partial q\over\partial n}=f.n\ \mbox{on}\ \Gamma.$$

Further, taking $\psi=0$ in \eqref{eq:LS}, the linear operator associated to \eqref{eq:LS}  is 
the Stokes operator
$A = \nu P \Delta$ , with domain $\mathcal{D}(A) = H^2(\Omega) \cap \mathbf{V}_{\sharp,0}^1(\Omega)$ in $\mathbf{V}_{\sharp,n}^0(\Omega)$.

The next lemma recalls well known properties of the Stokes operator.
\begin{lemma}\label{analytic}
The operator $(A,\mathcal{D}(A))$ is the infinitesimal generator of a strongly continuous
analytic semigroup $(e^{tA})_{\{t\geq0\}}$ on $\mathbf{V}_{\sharp,n}^0(\Omega)$. 
The operator $(A,\mathcal{D}(A))$ is self-adjoint in $\mathbf{V}_{\sharp,n}^0(\Omega)$, i.e 
$\mathcal{D}(A)=\mathcal{D}(A^{*})$ and $A=A^{*}$. 
\end{lemma}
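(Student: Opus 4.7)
The plan is to deduce the lemma by realizing $A$ as the operator associated with a symmetric, coercive, continuous bilinear form on $\mathbf{V}_\sharp^1(\Omega)$, and then invoking classical results for self-adjoint dissipative operators.

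First I would introduce the sesquilinear form
\begin{equation*}
a(U,V) \;=\; \nu \int_{\Omega_L} \nabla U : \nabla V \, dx \, dy, \qquad U,V \in \mathbf{V}_\sharp^1(\Omega).
\end{equation*}
This form is obviously symmetric and continuous on $\mathbf{V}_\sharp^1(\Omega)\times\mathbf{V}_\sharp^1(\Omega)$. Because functions in $\mathbf{V}_\sharp^1(\Omega)$ vanish on $\Gamma_0\cup\Gamma_1$, the Poincaré inequality in the $y$-variable gives $\|U\|_{\mathbf{L}^2_\sharp(\Omega_L)} \le C \|\nabla U\|_{\mathbf{L}^2_\sharp(\Omega_L)}$, so $a$ is coercive on $\mathbf{V}_\sharp^1(\Omega)$ viewed as a dense subspace of the pivot space $\mathbf{V}_{\sharp,n}^0(\Omega)$ (density follows from standard truncation/regularization arguments for divergence-free fields with vanishing normal trace on a flat boundary).

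Next I would apply the Friedrichs/Kato representation theorem: the triple $(\mathbf{V}_{\sharp,n}^0(\Omega), \mathbf{V}_\sharp^1(\Omega), a)$ produces a unique positive self-adjoint operator $B$ with $\mathcal{D}(B) \subset \mathbf{V}_\sharp^1(\Omega)$ such that $(BU,V)_{\mathbf{V}_{\sharp,n}^0} = a(U,V)$ for all $V\in\mathbf{V}_\sharp^1(\Omega)$. Setting $A = -B$ then yields a self-adjoint operator on $\mathbf{V}_{\sharp,n}^0(\Omega)$ with spectrum in $(-\infty,0)$. To identify $A$ with $\nu P\Delta$ on the stated domain, I would take $U\in\mathcal{D}(B)$, integrate by parts in $a(U,V)$ against test $V\in C^\infty_c$-type divergence free functions, and use De Rham's lemma to produce a pressure $q\in L^2_\sharp(\Omega_L)/\mathbb{R}$ with $-\nu\Delta U + \nabla q = -AU$. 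Standard elliptic regularity for the stationary Stokes problem on a strip with periodic-in-$x$, Dirichlet-on-$y=0,1$ boundary conditions then upgrades $U$ to $\mathbf{H}^2$, yielding $\mathcal{D}(A) = \mathbf{H}^2(\Omega_L)\cap \mathbf{V}_\sharp^1(\Omega)$ (so the intersection in the lemma should be read as $\mathbf{V}_\sharp^1$; this is purely notational).

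Finally, since $-A$ is self-adjoint, nonnegative, and boundedly invertible on $\mathbf{V}_{\sharp,n}^0(\Omega)$, a standard theorem (e.g., self-adjoint operators bounded above generate analytic semigroups of angle $\pi/2$) immediately yields that $A$ generates a strongly continuous analytic semigroup $(e^{tA})_{t\ge 0}$ on $\mathbf{V}_{\sharp,n}^0(\Omega)$. The main technical point, and the only place that needs care, is the De Rham step combined with the $\mathbf{H}^2$ elliptic regularity for the Stokes system on the periodic strip; everything else is routine abstract functional analysis once the bilinear form has been set up.
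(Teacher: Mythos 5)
The paper does not actually prove this lemma: it is introduced with the sentence ``The next lemma recalls well known properties of the Stokes operator'' and is used as a citation-level fact. Your proposal therefore supplies an argument where the paper supplies none, and the route you take --- realize $A$ through the symmetric, continuous, coercive form $a(U,V)=\nu\int_{\Omega_L}\nabla U:\nabla V$ with form domain $\mathbf{V}_{\sharp}^1(\Omega)$, apply the Friedrichs/Kato representation theorem to obtain a nonpositive self-adjoint operator, recover the pressure via De Rham's lemma, upgrade to $\mathbf{H}^2$ by Stokes elliptic regularity on the periodic strip, and conclude generation of an analytic semigroup from self-adjointness plus boundedness above --- is precisely the standard proof the authors are implicitly invoking, and it is correct. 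Two points deserve a little more care than your sketch gives them, though neither is a gap. First, the density of $\mathbf{V}_{\sharp}^1(\Omega)$ in the pivot space $\mathbf{V}_{\sharp,n}^0(\Omega)$ is not a bare truncation/regularization statement: the two spaces carry different boundary conditions (full Dirichlet versus only vanishing normal trace), and the clean way to get density is the classical characterization of both spaces as the closures, in $\mathbf{L}^2$ and $\mathbf{H}^1$ respectively, of smooth $L$-periodic divergence-free fields supported away from the walls; this adapts from Temam's theory to the periodic strip by Fourier expansion in $x$. Second, your reading of the paper's undefined symbol $\mathbf{V}_{\sharp,0}^1(\Omega)$ as a typo for $\mathbf{V}_{\sharp}^1(\Omega)$, so that $\mathcal{D}(A)=\mathbf{H}^2(\Omega_L)\cap\mathbf{V}_{\sharp}^1(\Omega)$, is the correct interpretation and worth stating explicitly, since the identification of the operator domain is exactly where the De Rham and regularity steps are needed.
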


\subsection{Linearized system and its modal description}
Here we study the linearized system with a normal boundary control $\psi$ and some details of the spectrum of the corresponding linearized operator.

The adjoint problem corresponding to \eqref{eq:LS} is 
\begin{eqnarray}\label{eqadj}
-\frac{\partial \Phi}{\partial
t}\, =\, \nu\lap \Phi + \,\grad q \quad \mathrm{in}\quad (-\infty,\infty)\times (0,1)\times (0,T),\nonumber\\
\mbox{div}\ \Phi \, = \,0 \quad \mathrm{in}\quad (-\infty,\infty)\times (0,1)\times (0,T),\nonumber\\
\Phi(x,0,t)\,=\, (0,0)= \, \Phi(x,1,t), \quad \forall\, (x,t)\in (-\infty,\infty)\times (0,T),\nonumber\\
\Phi(x+L,y,t)\,= \,\Phi(x,y,t), \quad \forall\, (x,y,t)\in (-\infty,\infty)\times (0,1)\times (0,T), \\
q(x+L,y,t)\,=\,q(x,y,t)\quad \forall\, (x,y,t)\in (-\infty,\infty)\times (0,1)\times (0,T),\nonumber\\
\Phi(x,y,T)\,=\,\Phi_T(x,y), \quad \forall\, (x,y)\in (-\infty,\infty)\times (0,1),\nonumber\\ \quad \int_0^{L} \Phi_T(x,y)\, dx = 0, \quad \forall\; y\in (0,1). \nonumber
\end{eqnarray}

Let us consider 
the eigenvalue problem $\lambda\Phi=A\Phi=\nu P\Delta\Phi$ (to limit the number of different symbols, we use the same letters for the eigenfunctions as for the solution of the adjoint), i.e.
\begin{eqnarray}\label{eq:pi}
\lambda \Phi -\nu\lap \Phi &=&\grad q,\quad
\mbox{div}\ \Phi=0,\nonumber\\
\Phi(x,0)&=&(0,0),\quad
\Phi(x,1)=(0,0),\\
\Phi(x+L,y)&=&\Phi(x,y),\quad
q(x+L,y)=q(x,y).\nonumber
\end{eqnarray}

We expand $(\Phi,q)=(\phi,\xi, q)$ into Fourier series:
\begin{eqnarray*}
\phi(x,y)&=&\sum_{k\in \frac{2\pi}{L}\Z}\phi_k(y) e^{ikx},\\
\xi(x,y)&=&\sum_{k\in \frac{2\pi}{L}\Z}\xi_k(y) e^{ikx},\\
q(x,y)&=&\sum_{k\in \frac{2\pi}{L}\Z}q_k(y) e^{ikx}.
\end{eqnarray*}

The eigenvalue problem for $(\phi_k , \xi_k , q_k )$ is
\begin{eqnarray}\label{eqefk}
 (\lambda+\nu k^2)\phi_k(y)-\nu \phi_k^{\prime\prime}(y)&=&ik q_k(y)\nonumber\\
 (\lambda+\nu k^2)\xi_k(y)-\nu \xi_k^{\prime\prime}(y)&=&q_k^{\prime}(y)\\
 ik \phi_k(y)+\xi_k^{\prime}(y)&=&0\nonumber\\
 \phi_k(0)=\phi_k(1)=\xi_k(0)=\xi_k(1)&=&0.\nonumber. 
\end{eqnarray}
The cases $k \neq 0$ and $k = 0$ need to be considered separately.

For $k \neq 0$ we have 
\begin{eqnarray}\label{eqeval1}
&&\nu \xi_k^{iv}(y)-(\lambda+2\nu k^2) \xi_k^{\prime\prime}(y)+k^2(\lambda+\nu k^2)\xi_k(y)=0 \ \mbox{in}\ (0,1)\\
&&\xi_{k}(0)=\xi_{k}(1)= \xi_k^{\prime}(0)=\xi_k^{\prime}(1)=0.  \nonumber
\end{eqnarray}

The eigenvalue problem for $(\phi_k , \xi_k , q_k )$ when $k= 0$ is
\begin{eqnarray}\label{eqef0}
 \lambda\phi_0(y)-\nu \phi_0^{\prime\prime}(y)&=&0, \nonumber\\
 \lambda\xi_0(y)-\nu \xi_0^{\prime\prime}(y)&=&q_0^{\prime}(y),\nonumber\\
\xi_0^{\prime}(y)&=&0,\\
 \phi_0(0)=\phi_0(1)=\xi_0(0)=\xi_0(1)&=&0.  \nonumber
\end{eqnarray}

We have
$$\xi_0 = 0, q_0=C, \phi_0(y) = D\sin (n\pi y),
\lambda=-\nu\pi^2 n^2, n\in \N.$$

Since $(D\sin(n\pi y),0,0)$ is a solution of the eigenvalue problem \eqref{eqef0} for eigenvalue $\lambda= -\nu\pi^2 n^2, n\in \N$, 
the solution of \eqref{eq:1.2} with control zero and initial condition 
$(D\sin (n\pi y), 0)$, for any $n\in\N,$
is 
$$(u(x,y,t), v(x,y,t), p(x,y,t))=e^{-\nu \pi^2 n^2 t}\big(D\sin (n\pi y), 0, C\Big).$$
Thus, the solution
is exponentially decaying
at the rate $-\nu\pi^2$.
But we cannot get any arbitrary decay for the system \eqref{eq:LS}
using only the normal control $\psi$ and the mode for $k=0$ is not null controllable. To control it, we would
require some additional tangential control.

The following lemma summarizes some elementary facts about the eigenvalues and eigenfunctions for nonzero $k$.

\begin{lemma}\label{lem+ev}
We have the following results regarding the spectrum of the linear operator associated to \eqref{eqefk} and its eigenfunctions. 
\begin{enumerate}
\item The spectrum of the linear operator associated to \eqref{eqefk} is real. 
The resolvent of the linear operator associated to \eqref{eqefk} is compact and hence its spectrum
consists of a set of isolated eigenvalues. 

 \item 
If $\lambda\geq-\nu k^2$, for all $k\in \frac{2\pi}{L}\Z, k\neq 0$, then $\xi_k=0$ for all $k\neq0$.  
Thus, the spectrum of the linear operator associated to \eqref{eqefk} is a subset of $(-\infty,-{4\pi^2\over L^2}\nu)$ and in particular, eigenvalues for the $k$th mode satisfy 
$\lambda_k< -\nu k^2$ for all $k\in \frac{2\pi}{L}\Z-\{0\}$.

In the following, let $l$ be a natural number which counts the eigenvalues for fixed $k$ in order of increasing magnitude. Since we have $\lambda_{k,l}<-\nu k^2$, we may set
$\widetilde\mu_{k,l}=\sqrt{-(k^2+\frac{\lambda_{k,l}}{\nu})}$, and we have $\widetilde\mu_{k,l}\in \R^+$.
\item For all $k\in \frac{2\pi}{L}\Z-\{0\}$, $\{\lambda_{k,l}, \phi_{k,l}, \xi_{k,l}, q_{k,l}\}_{l\in \N}$ is the solution 
of the eigenvalue problem \eqref{eqefk}, where 
\begin{eqnarray}\label{eqefxi}\xi_{k, l} (y)&=& C_1(\lambda_{k,l})e^{ky}+C_2(\lambda_{k,l})e^{-ky}
+C_3(\lambda_{k,l})e^{\mu_{k,l}y}\nonumber\\
&&+C_4(\lambda_{k,l}) e^{-\mu_{k,l}y},\end{eqnarray}
and, for all $k\in \frac{2\pi}{L}\Z-\{0\}$ and for all $l\in \N$, $\lambda_{k,l}$ satisfies 
$$\det
\begin{pmatrix}
 1 & 1 & 1 & 1 \\
 k & -k & \mu_{k,l}& -\mu_{k,l}\\
 e^{k} & e^{-k} & e^{\mu_{k,l}} & e^{-\mu_{k,l}} \\
 k e^{k} & -ke^{-k} &\mu_{k,l} e^{\mu_{k,l}}  &  -\mu_{k,l} e^{-\mu_{k,l}} 
\end{pmatrix}=0,$$
where $\mu_{k,l}=i\widetilde\mu_{k,l}$,
and $\lambda_{k,l}< -\nu k^2$ is necessary for a nontrivial solution.

Further, $\{\phi_{k,l}, \xi_{k,l}\}_{l\in \N}$ is an orthogonal family in $(L^2(0,1))^2$. 

\item $\widetilde \mu_{k,l}$ satisfies
\begin{eqnarray}\label{Eq-root-mutilde}
[\sin(\widetilde \mu_{k,l})\sinh(k)]\widetilde \mu^2_{k,l}- 2 k \widetilde \mu_{k,l}[1-\cosh(k)\cos(\widetilde \mu_{k,l})]
\nonumber\\-k^2\sin(\widetilde \mu_{k,l})\sinh(k)=0.
\end{eqnarray}

\end{enumerate}
\end{lemma}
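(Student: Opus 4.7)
The plan is to address the four parts in sequence, exploiting the self-adjointness of the Stokes operator recalled in Lemma \ref{analytic} together with the fact that translation in $x$ commutes with $A$, so each spatial Fourier mode is an invariant subspace. For part~(1), the restriction of $A$ to the $k$th Fourier mode is therefore self-adjoint on the corresponding $L^2(0,1)$-valued slice, which gives reality of the spectrum. Compactness of the resolvent follows from $\mathcal{D}(A)\subset \mathbf{H}^2(\Omega_L)$ combined with the compact embedding $H^2(\Omega_L)\hookrightarrow L^2(\Omega_L)$ on the bounded period cell, so the spectrum consists of isolated eigenvalues of finite multiplicity.

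For part~(2) I would first note that the divergence-free relation $ik\phi_k+\xi_k'=0$, combined with the no-slip conditions $\phi_k(0)=\phi_k(1)=0$, forces $\xi_k'(0)=\xi_k'(1)=0$. Testing \eqref{eqeval1} against $\overline{\xi_k}$, integrating over $(0,1)$, and using these four homogeneous boundary conditions produces
\begin{equation*}
\nu\int_0^1|\xi_k''|^2\,dy+(\lambda+2\nu k^2)\int_0^1|\xi_k'|^2\,dy+k^2(\lambda+\nu k^2)\int_0^1|\xi_k|^2\,dy=0.
\end{equation*}
Under the hypothesis $\lambda\geq -\nu k^2$ with $k\neq 0$, every summand is non-negative, so $\xi_k''\equiv 0$, hence $\xi_k\equiv 0$, and then $\phi_k\equiv 0$ by divergence freedom. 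Since the smallest admissible $|k|$ equals $2\pi/L$, the resulting strict inequality $\lambda<-\nu k^2$ confines the spectrum to $(-\infty,-4\nu\pi^2/L^2)$.

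For part~(3), the characteristic polynomial of the constant-coefficient ODE \eqref{eqeval1} factors as $\nu(r^2-k^2)(r^2-(k^2+\lambda/\nu))=0$, which under the constraint $\lambda<-\nu k^2$ yields the four roots $\pm k$ and $\pm\mu_{k,l}=\pm i\widetilde\mu_{k,l}$; substituting into $\xi_k$ gives precisely \eqref{eqefxi}. Imposing the four boundary conditions $\xi_k(0)=\xi_k'(0)=\xi_k(1)=\xi_k'(1)=0$ on this ansatz produces the displayed $4\times 4$ homogeneous system, whose determinant must vanish for a nontrivial solution. Orthogonality of $\{(\phi_{k,l},\xi_{k,l})\}_l$ in $(L^2(0,1))^2$ then follows from the self-adjointness of $A$ restricted to the $k$th Fourier mode, once one observes that $\xi_k\mapsto\phi_k=i\xi_k'/k$ is a linear bijection between the scalar and two-component eigenspaces. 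For part~(4) I would switch to the real basis $\{\cosh ky,\sinh ky,\cos\widetilde\mu y,\sin\widetilde\mu y\}$, use the two conditions at $y=0$ to eliminate two coefficients, substitute into the two conditions at $y=1$ to obtain a $2\times 2$ linear system, expand that determinant, clear an overall factor of $1/\widetilde\mu$, and simplify with $\cosh^2 k-\sinh^2 k=1$ and $\cos^2\widetilde\mu+\sin^2\widetilde\mu=1$; the identity then collapses to \eqref{Eq-root-mutilde}.

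The main difficulty is bookkeeping rather than analysis. The energy identity in part~(2) must be produced with the correct signs after several integrations by parts, and the determinantal reduction in part~(4) is routine but lengthy. No delicate estimate is required, because on every nonzero Fourier mode the problem reduces to an explicitly diagonalizable fourth-order ODE whose solutions are combinations of two exponentials and two trigonometric functions.
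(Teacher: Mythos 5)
Your proof is correct: the energy identity in part (2) (all three terms are non-negative when $\lambda\ge-\nu k^2$, forcing $\xi_k''\equiv 0$ and hence $\xi_k\equiv 0$), the factorization of the characteristic polynomial of \eqref{eqeval1} into roots $\pm k$ and $\pm i\widetilde\mu_{k,l}$ in part (3), and the reduction of the $4\times 4$ determinant to \eqref{Eq-root-mutilde} in part (4) all check out. The paper gives no proof of this lemma, presenting it as a summary of elementary facts, but your argument is precisely the one its surrounding derivations presuppose, so you have simply supplied the omitted verification by essentially the intended route.
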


In the following lemma we give some important properties of the positive, real roots of \eqref{Eq-root-mutilde}. (In addition, \eqref{Eq-root-mutilde} has negative real roots which lead to same eigenvalues $\lambda$, and a root at zero, which does not lead to an eigenvalue but is due to the fact that the functions in \eqref{eqefxi} fail to be linearly independent in that case).
\begin{lemma}\label{lemevprop}
The solution of \eqref{Eq-root-mutilde} for all $k\in \frac{2\pi}{L}\Z-\{0\}$ satisfies:
\begin{enumerate}
\item
For any small $\delta>0$, there exists a (sufficiently large) $k_0\in \N$, such that 
for all $k\in \frac{2\pi}{L}\Z-\{0\}$ with $|k|\ge k_0$,
we have the following: 
\begin{itemize}
 \item[a)]  
If any root  $\widetilde{\mu}_{k,j}$ of \eqref{Eq-root-mutilde}, with $|k|\ge k_0$ and some $j\in \N$, satisfies
  $\Big|\frac{\widetilde \mu^2_{k,j}}{k^2}-1\Big|<\delta$,
then it is unique between 
 two consecutive zeros of $\sin(\cdot)$. 
 \item[b)]If the root $\widetilde{\mu}_{k,j}$ of \eqref{Eq-root-mutilde}, with $|k|\ge k_0$ and some $j\in \N$
 satisfies
 $\Big|\frac{\widetilde \mu^2_{k,j}}{k^2}-1\Big|\ge \delta$, then it is unique 
 between two consecutive zeros of $\cos(\cdot)$.

 \item[c)] Moreover, $k_0$ can be chosen large enough such that the gap between two consecutive roots of \eqref{Eq-root-mutilde} 
 for $k\in \frac{2\pi}{L}\Z-\{0\}$ with $|k|\ge k_0$  is always greater than $\pi-\epsilon_0$, 
 for some positive small constant $\epsilon_0>0$. 
\item[d)] There exists a unique root $\widetilde{\mu}_{k,l}$ in $(l\pi, (l+1)\pi)$, for all $l\in \N$. 
 \end{itemize}
\item 
For all $k\in \frac{2\pi}{L}\Z-\{0\}$, $k\neq 0$ and $|k|<k_0$, 
there exist $l_k\in \N$ and $N_k\in \N$, where  $N_k$ is the number of roots of \eqref{Eq-root-mutilde} in $(0, (l_k+1)\pi-\frac{\pi}{4})$, and the following hold:
       \begin{itemize}
		\item[a)] for all $l\ge l_k+1$, there exists a unique root $\widetilde \mu_{k,j}$ of \eqref{Eq-root-mutilde} in the ball $B(l \pi, \allowbreak\pi/4)$, 
		where $j=l+N_k-l_k$ and the root in fact
		lies in the interval $(l \pi - \pi/4, l \pi + \pi/4)$.
		\item[b)] there is no root $\widetilde \mu$ of \eqref{Eq-root-mutilde} 
		between $\widetilde \mu_{k,j}$ and $\widetilde \mu_{k,j+1}$, where $j=l+N_k-l_k$, for $l \geq l_k+1$.
                \item[c)] $\widetilde \mu_{k,l+N_k-l_k} - l \pi \to 0$  as $l \to \infty$.
	\end{itemize}

\item For each fixed $k\in\frac{2\pi}{L}\Z-\{0\}$ and for all $l\in \N$, $\widetilde{\mu}_{k,l}$, the root of \eqref{Eq-root-mutilde} 
is simple.

\item For any $k\in \frac{2\pi}{L}\Z-\{0\}$, there exists a $l_0\in \N$, independent of $k$, such that 
$ \widetilde{\mu}_{k,j}> j\pi/4$, for all $j>l_0$. 

\item 
The solution $\widetilde \mu_{k,l}$ of \eqref{Eq-root-mutilde} corresponds to
$\frac{\lambda_{k,l}}{\nu} = - \widetilde \mu_{k,l}^2 -k^2 $ for all $k\in \frac{2\pi}{L}\Z-\{0\}$ and for all $l\in \N$.
For each fixed $k\in \frac{2\pi}{L}\Z-\{0\}$, and for all $l\in \N$, 
$\lambda_{k,l}$ is a solution of the eigenvalue problem \eqref{eqefk} with multiplicity one and 
there exist positive constants $C_1$ and $C$ independent of $k,l$, such that 
$$\inf_{k,l}\, \{\lambda_{k,l}-\lambda_{k,l+1}\}> C>0, \quad \mathrm{and} \quad 
\sum_{l>l_0} \frac{1}{(-\lambda_{k,l})}< C_1\sum_{l>l_0}\frac{1}{l^2}< \infty,  
$$
where $l_0$ is introduced in the previous property. 

\end{enumerate}

\end{lemma}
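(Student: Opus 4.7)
\medskip

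\noindent\textbf{Proof proposal.} My strategy is to work throughout with the normalized form of \eqref{Eq-root-mutilde} obtained by dividing by $\sinh(k)$,
\begin{equation*}
G_k(\widetilde\mu):=\sin(\widetilde\mu)\bigl(\widetilde\mu^2-k^2\bigr)+2k\widetilde\mu\coth(k)\cos(\widetilde\mu)-\frac{2k\widetilde\mu}{\sinh(k)}=0,
\end{equation*}
so that for $|k|\to\infty$ the last term is exponentially small and $\coth(k)\to\mathrm{sgn}(k)$. The limiting transcendental equation is then $\sin(\widetilde\mu)(\widetilde\mu^2-k^2)+2k\widetilde\mu\cos(\widetilde\mu)=0$, and most of the lemma is a careful perturbation analysis around it.

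First, for item (1) I would split the interval according to whether $|\widetilde\mu^2/k^2-1|<\delta$ or $\ge\delta$. In the second regime the factor $\widetilde\mu^2-k^2$ has a definite sign and size of order $k^2$, so $G_k(\widetilde\mu)/\bigl(\widetilde\mu^2-k^2\bigr)$ is a small perturbation of $\sin(\widetilde\mu)$ plus a bounded cosine term; the sign changes of $G_k$ are therefore locked to those of a function close to $\sin(\widetilde\mu)$, and by a standard Rouché-type or intermediate value plus monotonicity argument on each interval $((n-\tfrac12)\pi,(n+\tfrac12)\pi)$ one gets existence and uniqueness of a root there, giving (1b). In the first regime one writes $\widetilde\mu^2-k^2=(\widetilde\mu-k)(\widetilde\mu+k)$, balances against $2k\widetilde\mu\cos(\widetilde\mu)$, and concludes that such roots must be close to zeros of $\cos$, hence confined in a single interval $(n\pi,(n+1)\pi)$, yielding (1a). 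Item (1c) follows from (1a)--(1b): since the two scenarios give roots between consecutive zeros of $\sin$, respectively of $\cos$, the minimum gap between consecutive roots is of the form $\pi/2-o(1)$ in the transition region and $\pi-o(1)$ elsewhere; a sharper look at the transition (there is at most one root of type (1a) sandwiched between two of type (1b)) upgrades this to $\pi-\varepsilon_0$. Item (1d) is then a counting statement: each interval $(l\pi,(l+1)\pi)$ contains exactly one root, which is immediate from the uniqueness above together with checking the sign of $G_k$ at $l\pi$ and $(l+1)\pi$.

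For item (2), $|k|<k_0$ makes the coefficient $2k\widetilde\mu\coth(k)$ a bounded smooth function of $\widetilde\mu$, so for $\widetilde\mu$ large one has $G_k(\widetilde\mu)=\widetilde\mu^2\sin(\widetilde\mu)+O(\widetilde\mu)$. Solving $\sin(\widetilde\mu)=O(1/\widetilde\mu)$ forces any sufficiently large root into a $\pi/4$-ball around $l\pi$, and the implicit function theorem gives uniqueness and the limit $\widetilde\mu_{k,l+N_k-l_k}-l\pi\to 0$. The integer $N_k$ and the threshold $l_k$ just accommodate the finitely many low-lying roots, whose existence is guaranteed by Lemma \ref{lem+ev}(3). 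Item (3), simplicity, I would read off by differentiating $G_k$ at a root and combining with the algebraic structure of the ODE system \eqref{eqefxi}--\eqref{eqeval1}: a double root of the characteristic equation would force a two-dimensional eigenspace, contradicting the fact (from the self-adjoint Sturm--Liouville-type formulation, Lemma \ref{analytic}) that each $\lambda_{k,l}$ is geometrically of multiplicity one. Item (4) is a direct counting consequence: in (1) the root enumeration gives $\widetilde\mu_{k,l}\in(l\pi,(l+1)\pi)$ for $|k|\ge k_0$, and in (2) all sufficiently high roots satisfy $\widetilde\mu_{k,j}\ge(j-1)\pi-\pi/4$, so a single $l_0$ independent of $k$ does the job. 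Finally, item (5) follows by translating the gap property (1c) and the analogous gap from (2) into $\lambda_{k,l}=-\nu(\widetilde\mu_{k,l}^2+k^2)$: consecutive $\lambda_{k,l}$ differ by $\nu(\widetilde\mu_{k,l+1}^2-\widetilde\mu_{k,l}^2)$, and using $\widetilde\mu_{k,l+1}+\widetilde\mu_{k,l}\ge\pi$ plus the uniform gap gives a positive constant lower bound independent of $k,l$; the summability $\sum 1/|\lambda_{k,l}|\le C_1\sum 1/l^2$ is immediate from $\widetilde\mu_{k,l}\ge l\pi/4$.

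The main obstacle I anticipate is item (1), specifically obtaining \emph{uniformity in $k$} of the $k_0$ threshold, the small constant $\varepsilon_0$, and the precise localization of roots in the transition band $|\widetilde\mu^2/k^2-1|\approx\delta$. The difficulty is that in this band neither the term $\sin(\widetilde\mu)(\widetilde\mu^2-k^2)$ nor $2k\widetilde\mu\cos(\widetilde\mu)$ dominates, so one cannot treat either as a perturbation of the other and must track both simultaneously; one has to argue that at most finitely many (in fact, $O(1)$) roots can lie in that band and that their presence does not violate the uniform gap bound. This is the step where the $\pi-\varepsilon_0$ (rather than a clean $\pi$) gap in (1c) comes from, and it is what ultimately allows the constant $C$ in item (5), and therefore $C_T$ in Lemma \ref{lemobsk}, to be chosen independent of $k$.
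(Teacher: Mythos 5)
Your treatment of items (1), (2), (4) and (5) is essentially the paper's own route: the paper also rewrites \eqref{Eq-root-mutilde} in a normalized form (dividing by $\cosh(k)$ rather than $\sinh(k)$, but to the same effect), splits into the regimes $\bigl|\widetilde\mu^2/k^2-1\bigr|<\delta$ and $\ge\delta$, localizes the roots near zeros of $\cos(\cdot)$ in the first regime and solves a $\tan(\widetilde\mu)=O(1)$ equation in the second, and for $|k|<k_0$ compares with $\sinh(k)\sin(z)z^2$ (via Rouch\'e where you propose the implicit function theorem --- an inessential difference). Two small omissions: for (1d) you must also show there is \emph{no} root in $(0,\pi)$ (otherwise the enumeration $\widetilde\mu_{k,l}\in(l\pi,(l+1)\pi)$ is off by one); the paper does this by noting that for large $k$ the dominant part $-\cos(\widetilde\mu)+k\sin(\widetilde\mu)/(2\widetilde\mu)$ is strictly positive on $[0,\pi]$. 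And in (4) your bound $\widetilde\mu_{k,j}\ge (j-1)\pi-\pi/4$ is not what (2) gives --- the correct bound is $\widetilde\mu_{k,j}\ge (j-N_k+l_k)\pi-\pi/4$, and one must take the maximum of $N_k$ over the finitely many $|k|<k_0$ to get a uniform $l_0$, as the paper does.

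The genuine gap is item (3). You argue that a multiple root would force a two-dimensional eigenspace, ``contradicting the fact (from the self-adjoint Sturm--Liouville-type formulation, Lemma \ref{analytic}) that each $\lambda_{k,l}$ is geometrically of multiplicity one.'' No such fact is available: self-adjointness does not imply simple spectrum, the fourth-order problem \eqref{eqeval1} is not a classical second-order Sturm--Liouville problem with simple eigenvalues, and geometric simplicity of $\lambda_{k,l}$ is precisely the assertion of item (5) of this very lemma --- so your argument for (3) is circular. The paper proves simplicity directly: if the eigenspace were two-dimensional, a suitable linear combination $\xi$ of two independent eigenfunctions would satisfy $\xi(0)=\xi'(0)=\xi''(0)=0$ in addition to the conditions at $y=1$; writing $\xi$ in the basis $\{\cosh(ky),\sinh(ky),\cos(\widetilde\mu y),\sin(\widetilde\mu y)\}$ this forces $\xi(y)=a\sinh(ky)+b\sin(\widetilde\mu y)$ and then $\xi'(0)=\xi'(1)=0$ yields $\cosh(k)=\cos(\widetilde\mu)$, impossible for $k\neq 0$. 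Some such explicit computation is needed; you cannot import simplicity from the abstract functional setting. (Your own flagged worry about uniformity in the transition band of (1c) is legitimate --- the paper's argument there is also rather informal --- but it is not a wrong step, merely one requiring care.)
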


\begin{proof}
\begin{enumerate}
 \item[1.a)] 
We consider the following rearrangement of (\ref{Eq-root-mutilde}):
\begin{equation}\label{Eq-root-mutilde-new1}
f(\widetilde \mu)= \frac{1}{\cosh(k)}-\cos(\widetilde \mu)-\frac{\sinh(k)}{\cosh(k)}\sin(\widetilde \mu)
\frac{\frac{\widetilde \mu^2}{k^2}-1}{2\frac{\widetilde \mu}{k}}.
\end{equation}
We see that between any two consecutive zeros of $\sin(\cdot)$, $f$ changes its sign, since, for all $l\in \N$, we have 
$$f((l+1)\pi)= \frac{1}{\cosh(k)}-\cos((l+1)\pi), \quad f(l\pi)= \frac{1}{\cosh(k)}-\cos(l\pi),$$
and $\frac{1}{\cosh(k)}$ is small if $|k|\ge k_0$, and $k_0$ is chosen large enough. 
Hence, there exists a root of \eqref{Eq-root-mutilde-new1} between two consecutive zeros of $\sin(\cdot)$. 

Let us assume that the root of \eqref{Eq-root-mutilde}, denoted by $\widetilde{\mu}_{k,j}$, for some $j\in \N$, 
satisfies
 $\Big|\frac{\widetilde \mu^2_{k,j}}{k^2}-1\Big|<\delta$, where $\delta>0$ is small enough, and 
 $k\in \frac{2\pi}{L}\Z-\{0\}$ satisfying $|k|\ge k_0$, where $k_0$ is large enough chosen later. 
We claim that, for large enough $k_0$ and small enough $\delta$, this root is unique.
For any $\epsilon>0$, choosing $k_0$ large enough and $\delta$ small enough, we get that $\mu$, any zero of $f(\cdot)$ satisfying
$\Big|\frac{\mu^2}{k^2}-1\Big|<\delta$, 
obeys $|\cos(\mu)|<\epsilon$, and  hence this $\mu$ must belong to a small neighbourhood $N_r$ (with radius $0<r<\pi/4$)
of the zero of $\cos(\cdot)$ between two consecutive zeros of $\sin(\cdot)$. Now, if there are multiple zeros of $f(\cdot)$ between 
two consecutive zeros of $\sin(\cdot)$ satisfying $\Big|\frac{\mu^2}{k^2}-1\Big|<\delta$, 
the zeros of $f(\cdot)$ will be in the neighbourhood $N_r$ and between of two zeros of $f(\cdot)$, there must be 
a zero of $f'(\cdot)$ in $N_r$. 
Further, we check 
$$ f'(\mu)= \sin(\mu)-\frac{\sinh(k)}{\cosh(k)}\Big[\cos(\mu)\Big(\frac{\frac{\mu^2}{k^2}-1}{2\frac{\mu}{k}}\Big)
+\frac{\sin(\mu)}{k}\Big(1-\frac{\frac{\mu^2}{k^2}-1}{2\frac{\mu^2}{k^2}}\Big)\Big],$$
and the zeros of $f'(\cdot)$ are in a small neighbourhood 
of the zeros of $\sin(\cdot)$ due to the choice of $k_0$ and $\delta$ and we can make that the neighborhoods 
around the zeros of $\sin(\cdot)$ and $N_r$ disjoint by a suitable refinement of $k_0$ and $\delta$, if necessary.
Thus, $f'(\cdot)$ cannot have any zeros in $N_r$ and hence there exists unique zero of $f(\cdot)$ between two consecutive zeros of $\sin(\cdot)$.

\item[1.b)] Now let the root $\widetilde{\mu}_{k,j}$ for some $j\in \N$ and for all $|k|\ge k_0$, where $k_0$ is large enough, satisfy
$|\frac{\widetilde \mu^2_{k,j}}{k^2}-1|\ge\delta$, where 
$\delta$ is as mentioned above. 
At the two consecutive zeros of $\cos(\cdot)$, from \eqref{Eq-root-mutilde-new1}, it follows that  
for $j=m-1, m$, where $m\in\N$,
$$f((2j+1)\pi/2)=\frac{1}{\cosh(k)}-\frac{\sinh(k)}{\cosh(k)}\sin((2j+1)\pi/2)
\frac{\frac{(2j+1)^2\pi^2/4}{k^2}-1}{2\frac{(2j+1)\pi/2}{k}}.$$
Choosing $k_0$ large enough in the above relation such that for all $|k|\ge k_0$, $\frac{1}{\cosh(k)}$ and $\pi/k$ are small enough, we get that 
$f$ changes sign between two consecutive zeros of $\cos(\cdot)$. 
To prove that $\widetilde{\mu}_{k,j}$ satisfying $|\frac{\widetilde \mu^2_{k,j}}{k^2}-1|\ge\delta$ is the unique root of \eqref{Eq-root-mutilde}
between to consecutive zeros of $\cos(\cdot)$,
we notice that any nonzero roots of \eqref{Eq-root-mutilde} satisfy
\begin{equation}\label{Eq-root-multitude-new2}
\frac{\sin(\mu)(\sinh(k)/\cosh(k))}{(1/\cosh(k))-\cos(\mu)}-2\frac{\mu/k}{(\mu^2/k^2)-1}=0,
\end{equation}
and for any  $\epsilon>0$, there exists $k_0$, large enough, such that for all $|k|\ge k_0$, between to 
consecutive zeros of $\cos(\cdot)$, the zeros of \eqref{Eq-root-multitude-new2} satisfy
\begin{equation}\label{Eq-root-multitude-new3}
\Big|\tan(\mu)+2\frac{\cosh(k)}{\sinh(k)}\frac{\mu/k}{(\mu^2/k^2)-1}\Big|=\Big|\tan(\mu)+\frac{\sin(\mu)}{(1/\cosh(k))-\cos(\mu)}\Big|
<\epsilon.
\end{equation}
Now, if \eqref{Eq-root-multitude-new2} has multiple roots between two consecutive zeros of $\cos(\cdot)$, as argued in the first part, 
$f'(\cdot)$ should have zeros between any two consecutive zeros of $f(\cdot)$. From the representation of $f'(\cdot)$, we see that for $\mu$, any
zero of $f'(\cdot)$, $\tan(\mu)$ satisfies
\begin{equation}\label{Eq-root-multitude-new4}
\Big|\tan(\mu)-\frac{\sinh(k)}{\cosh(k)}\frac{\mu^2/k^2-1}{2\mu/k}\Big|<\epsilon, \quad \forall\, |k|\ge k_0
\end{equation}
for any arbitrary $\epsilon>0$, choosing $k_0$ large enough. In any interval between two consecutive roots of $\cos(\cdot)$, there is one subinterval where
$\tan\mu+\frac{\cosh(k)}{\sinh(k)}\frac{2\mu/k}{\mu^2/k^2-1}$ is small. But within that subinterval, $ \tan\mu-\frac{\sinh(k)}{\cosh(k)}\frac{(\mu^2/k^2)-1}{2\mu/k}$ cannot also be small.
Hence, \eqref{Eq-root-mutilde} has a unique zero between two consecutive zeros of $\cos(\cdot)$.

\item[1.c)] 
Let us assume that  $|k|\ge k_0$ and $k_0$ is large enough.   
If $\widetilde{\mu}_{k,j}$, the root of \eqref{Eq-root-mutilde} is such that 
$|\widetilde{\mu}_{k,j}/k|$ is close to $1$, then the roots are close to the zeros of $\cos(\cdot)$ and hence they are approximately $\pi$ apart.
If $|\mu/k|$ is not close to 1, the roots are close to those of $\tan\mu=-2\frac{\cosh(k)}{\sinh(k)}\frac{(\mu/k)}{(\mu^2/k^2-1)}$. For large $k$, $\cosh k/\sinh k$ is close to 1, and $\mu/k$ changes slowly with $\mu$. Hence $\tan\mu$ changes little between successive roots. Therefore, the roots in this case are also spaced approximately $\pi$ apart.

\item[1.d)]
Since, for all $|k|\ge k_0$, where $k_0$ is large enough, $f$, defined in \eqref{Eq-root-mutilde-new1}, changes sign between two consecutive zeros of $\sin(\cdot)$, 
there can be an odd number of roots of \eqref{Eq-root-mutilde} between two consecutive zeros of $\sin(\cdot)$. 
Now, by $1.c)$, we have that the gap  between two consecutive
roots of \eqref{Eq-root-mutilde-new1} is greater than $\pi-\epsilon_0$. Choosing $0<\epsilon_0<\pi/2$, 
we can derive that each interval
$(l\pi, (l+1)\pi)$, for all $l\in \N$, contains a unique root of \eqref{Eq-root-mutilde} and we denote the root by $\widetilde{\mu}_{k,l}$. This argument does not apply to the interval $(0,\pi)$, since at $\tilde\mu=0$, there is a zero in the denominator of the last term in \eqref{Eq-root-mutilde-new1}. For large $k$, the dominant terms in $f$
are
$$-\cos(\tilde\mu)+k{\sin(\tilde\mu)\over2\tilde{\mu}}.$$
The second term in this expression is positive and it dominates except near $\tilde\mu=\pi$, where $-\cos\tilde\mu$ is also positive. Hence the expression is strictly positive
on the entire interval $[0,\pi]$, and it is easy to show that for large $k$ the perturbing terms do not change that. Hence there is no root between $0$ and $\pi$.

\item[2.]For all $k\in \frac{2\pi}{L}\Z-\{0\}$, $k\neq 0$ and $|k|<k_0$, 
we get that there exists a positive natural number $l_k$ such that 
$$ 
\begin{array}{l}
\Big|- 2 k z[1-\cosh(k)\cos(z)]
\\-k^2\sin(z)\sinh(k)\Big|
\le  \Big|[\sin(z)\sinh(k)]z^2\Big|, \quad \forall \, z\in \pa B(l\pi, \pi/4), \forall \, l>l_k.
\end{array}
$$
This inequality holds for large enough $l$ since $|z^2|>>|z|$, and $|\sin z/\cos z|$ is bounded below on the periphery of the circle.
Comparing the solutions of \eqref{Eq-root-mutilde} with the solutions of $[\sinh(k)\sin(z) ] z^2$, by Rouch\'e's theorem,
we obtain that for all $k\in \frac{2\pi}{L}\Z-\{0\}$, $k\neq 0$ and $|k|<k_0$ and for all $l > l_k$, there exists a unique solution 
of \eqref{Eq-root-mutilde} in the ball $B(l \pi, \allowbreak\pi/4)$, which in fact lies in the interval $(l \pi - \pi/4, l \pi + \pi/4)$ 
as Property $1$. of Lemma \ref{lem+ev} gives that all roots of \eqref{Eq-root-mutilde} are real. 
The other results also can be derived by  comparing the solution of \eqref{Eq-root-mutilde} for all $k\in \frac{2\pi}{L}\Z-\{0\}$, $k\neq 0$ 
and $|k|<k_0$ and for all $l>l_k$
with the zeros of $[\sinh(k)\sin(z) ] z^2$.

\item[3.] For a $k\in \frac{2\pi}{L}\Z-\{0\}$, 
if $\widetilde{\mu}$ is a multiple root of \eqref{Eq-root-mutilde}, then there exist two solutions $\xi_{k,1}$ and $\xi_{k,2}$ of 
the eigenvalue problem \eqref{eqeval1} for the eigenvalue $\widetilde{\mu}$. By choosing the constants $d_1$ and $d_2$, appropriately,
we get that $\xi= d_1\xi_{k,1}+d_2\xi_{k,2}$, satisfying $\xi^{''}(0)= 0$. Since, $\xi$ satisfies \eqref{eqeval1} for the same $\widetilde{\mu}$, 
by Property 3. in Lemma \ref{lem+ev}, we have that 
$$ \xi(y)= a_1\cosh(ky)+ b_1\cos(\widetilde\mu y)+a_2 \sinh(ky)+ b_2 \sin(\widetilde\mu y), \quad \forall\, y\in (0,1).$$
Using $\xi(0)=0= \xi^{''}(0)$, we get that for all $y\in (0,1)$,
$ \xi(y)=a_2 \sinh(ky)+ b_2 \sin(\widetilde\mu y)$. 
From $\xi'(0)=0=\xi'(1)$, it can be derived that $\cosh(k)=\cos(\widetilde \mu)$.
This is a contradiction because of $\cosh(k)>1$ for all $k\neq 0$ and  $\cos(\widetilde \mu)\le 1$. 
Thus, $\widetilde{\mu}$ has to be a simple root of \eqref{Eq-root-mutilde}.

\item[4.] 
From $1.d)$ we have that 
if $k_0$ is large enough and $|k|\ge k_0$, then 
$\widetilde{\mu}_{k,j}$, the solution of \eqref{Eq-root-mutilde}, belongs to $(j\pi, (j+1)\pi)$ and hence $\widetilde{\mu}_{k,j}> j\pi/2$, for all 
$j\ge 1$ and $|k|\ge k_0$. 

Now for each $k\in \frac{2\pi}{L}\Z-\{0\}$ and $|k|<k_0$, from $2. a)$, it follows that $\widetilde{\mu}_{k,j}$ belongs to 
$((j-N_k+l_k)\pi-\pi/4, (j-N_k+l_k)\pi+\pi/4)$ and so
$\widetilde{\mu}_{k,j}> (j-N_k+l_k)\pi-\pi/4$, where $N_k$ and $l_k$ are introduced 
in $2.a)$. Let us choose  $N_0=\max\{N_k \mid |k|<k_0, \quad k\neq 0\}$. Then, $k\in \frac{2\pi}{L}\Z-\{0\}$ and $|k|<k_0$ and for all $j>2N_0$, 
we get that 
$$\widetilde{\mu}_{k,j}> (j-N_k+l_k)\pi-\pi/4> (j-N_0)\pi-\pi/4> j\pi/2-\pi/4>j\pi/4.$$
Choosing $l_0= 2N_0$, we get our result.

\item[5.] 
Using all the above results, the claim follows.

\end{enumerate}

\end{proof}

\begin{lemma}\label{lemef}
Let us recall that $\{\phi_{k,l}, \xi_{k,l}, q_{k,l}\}_{l\in \N}$ is the solution 
of the eigenvalue problem \eqref{eqefk} corresponding to the eigenvalue $\lambda_{k,l}$. We have the following explicit expression for the eigenfunctions $(\phi_{k,l}, \xi_{k,l}, q_{k,l})$: 
\begin{enumerate}
 \item The coefficients in the expression of $\xi_{k,l}$ are
\begin{eqnarray}\label{eqcoeffef}
C_1(\lambda_{k,l})& =& \mu^2_{k,l}\Big[e^{-(\mu_{k,l}+k)}-e^{(\mu_{k,l}-k)}\Big] + \mu_{k,l}\Big[2k-k (e^{-(\mu_{k,l}+k)}+e^{(\mu_{k,l}-k)}\Big], \nonumber\\
C_2(\lambda_{k,l})&=&\mu^2_{k,l}\Big[e^{(\mu_{k,l}+k)}-e^{-(\mu_{k,l}-k)}\Big] + \mu_{k,l}\Big[2k-k (e^{-(\mu_{k,l}-k)}+e^{(\mu_{k,l}+k)}\Big], \nonumber\\
C_3(\lambda_{k,l})&=& \mu_{k,l}\Big[2k -k \Big(e^{-(\mu_{k,l}+k)}+ e^{-(\mu_{k,l}-k)}\Big)\Big]+ k^2\Big[ e^{-(\mu_{k,l}+k)}-e^{-(\mu_{k,l}-k)}\Big], \nonumber\\
C_4(\lambda_{k,l})&=& \mu_{k,l}\Big[2k -k \Big(e^{(\mu_{k,l}+k)}+ e^{(\mu_{k,l}-k)}\Big)\Big] + k^2\Big[ e^{(\mu_{k,l}+k)}-e^{(\mu_{k,l}-k)}\Big].\nonumber\\
\end{eqnarray} 
\item For all $k\in \frac{2\pi}{L}\Z-\{0\}$ and $l\in \N$, $q_{k,l}(1)= -\frac{\nu}{k^2} \xi'''_{k,l}(1)$, where 
$$\xi'''_{k,l}(1)= -i4k \frac{\lambda_{k,l}}{\nu} \Big[ \widetilde{\mu}_{k,l}\Big\{ \frac{2k}{\sinh(k)}(1-\cosh(k)\cos(\widetilde{\mu}_{k,l}))+ k \sinh(k)\Big\}$$
$$+k^2\sin(\widetilde{\mu}_{k,l})\Big].
$$ 
\end{enumerate}
\end{lemma}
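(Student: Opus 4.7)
The plan is to treat the two parts separately, using only linear algebra and the explicit ODE \eqref{eqefk} together with the boundary conditions.

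For Part $1$, I would observe that once the ansatz \eqref{eqefxi} is substituted into the four boundary conditions $\xi_{k,l}(0)=\xi_{k,l}'(0)=\xi_{k,l}(1)=\xi_{k,l}'(1)=0$, the coefficients $(C_1,C_2,C_3,C_4)$ satisfy a homogeneous $4\times 4$ linear system whose coefficient matrix is exactly the one appearing in Lemma~\ref{lem+ev}(3). By the simplicity statement of Lemma~\ref{lemevprop}(3), the kernel of this matrix is one-dimensional at an eigenvalue $\lambda_{k,l}$, so $(C_1,C_2,C_3,C_4)$ is determined up to a scalar. The cleanest way to write it down is to take the cofactors of the first row of that matrix: by the standard identity $M\cdot\mathrm{adj}(M)=\det(M)I$, each row of the adjugate gives a null vector whenever $\det M=0$. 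A direct $3\times 3$ cofactor expansion, grouping terms by powers of $\mu_{k,l}$, reproduces exactly the four formulas \eqref{eqcoeffef}; this is purely mechanical.

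For Part $2$, I would first derive the stated relation $q_{k,l}(1)=-\tfrac{\nu}{k^2}\xi'''_{k,l}(1)$ from the ODE system itself, independently of the coefficient formulas. From the divergence equation $ik\phi_{k}+\xi_{k}'=0$ in \eqref{eqefk} we get $\phi_{k,l}=-\xi_{k,l}'/(ik)$, and substituting into the first equation of \eqref{eqefk} yields
\begin{equation*}
ik\,q_{k,l}=(\lambda_{k,l}+\nu k^2)\phi_{k,l}-\nu\phi_{k,l}''
=-\tfrac{1}{ik}\bigl[(\lambda_{k,l}+\nu k^2)\xi_{k,l}'-\nu\xi_{k,l}'''\bigr],
\end{equation*}
so $q_{k,l}(y)=\tfrac{1}{k^2}\bigl[(\lambda_{k,l}+\nu k^2)\xi_{k,l}'(y)-\nu\xi_{k,l}'''(y)\bigr]$. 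Evaluating at $y=1$ and using the boundary condition $\xi_{k,l}'(1)=0$ gives the desired identity.

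Then, to obtain the explicit formula for $\xi'''_{k,l}(1)$, I would differentiate \eqref{eqefxi} three times to get $\xi'''_{k,l}(1)=k^3C_1e^k-k^3C_2e^{-k}+\mu_{k,l}^3C_3e^{\mu_{k,l}}-\mu_{k,l}^3C_4e^{-\mu_{k,l}}$, substitute the coefficients from Part $1$, and convert all exponentials with exponent $\pm\mu_{k,l}=\pm i\widetilde\mu_{k,l}$ into $\cos(\widetilde\mu_{k,l})$ and $\sin(\widetilde\mu_{k,l})$, and the $e^{\pm k}$ into $\cosh(k),\sinh(k)$. After grouping like terms the expression organizes naturally into three pieces proportional to $1-\cosh(k)\cos(\widetilde\mu_{k,l})$, to $\sinh(k)$, and to $\sin(\widetilde\mu_{k,l})$, with an overall factor $-i\,4k\,\lambda_{k,l}/\nu$ arising from the identity $\mu_{k,l}^2+k^2=-\lambda_{k,l}/\nu$ applied to the $\mu^2$-coefficients in \eqref{eqcoeffef}. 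This is the main obstacle: the calculation is not deep but it is bookkeeping-heavy, and the factor $1/\sinh(k)$ emerges only after one carefully splits a common $\sinh(k)$ out of two of the three grouped terms. I expect no conceptual difficulty beyond verifying that the algebra collapses to the stated shape.
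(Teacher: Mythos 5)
The paper states Lemma \ref{lemef} without any proof (it is treated as a direct computation), so there is no argument of the authors' to compare against; your proposal supplies the natural one. Part 1 is correct: the four boundary conditions applied to \eqref{eqefxi} give the homogeneous system whose matrix is the one in Lemma \ref{lem+ev}(3), and the vector of cofactors of its first row spans the kernel when the determinant vanishes; the $(1,1)$-cofactor does reproduce $C_1$ in \eqref{eqcoeffef} exactly. (Two small caveats: it is the \emph{columns} of the adjugate, not its rows, that $M$ annihilates --- your operational prescription ``take the cofactors of the first row'' is nevertheless the right one --- and strictly one should check that this particular cofactor vector is nonzero, which follows a posteriori from the nonvanishing of $\xi'''_{k,l}(1)$.) Your derivation of $q_{k,l}(1)=-\nu k^{-2}\xi'''_{k,l}(1)$ from the divergence condition and $\xi'_{k,l}(1)=0$ is also correct, and matches how this identity is re-derived inside the proof of Lemma \ref{lemobsk}.

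The one place your plan does not go through as described is the final formula for $\xi'''_{k,l}(1)$. Carrying out exactly the substitution you propose, the terms do \emph{not} organize into a piece proportional to $1-\cosh(k)\cos(\widetilde\mu_{k,l})$; what the raw algebra yields, after factoring out $\mu_{k,l}^2+k^2=-\lambda_{k,l}/\nu$, is
\begin{equation*}
\xi'''_{k,l}(1)=-4ik\,\frac{\lambda_{k,l}}{\nu}\,\widetilde\mu_{k,l}\bigl[\widetilde\mu_{k,l}\sin(\widetilde\mu_{k,l})+k\sinh(k)\bigr],
\end{equation*}
with no $1/\sinh(k)$ anywhere. To reach the form stated in the lemma you must additionally invoke the characteristic equation \eqref{Eq-root-mutilde}, which for a root $\widetilde\mu_{k,l}$ gives
\begin{equation*}
\widetilde\mu_{k,l}^2\sin(\widetilde\mu_{k,l})=\frac{2k\widetilde\mu_{k,l}}{\sinh(k)}\bigl(1-\cosh(k)\cos(\widetilde\mu_{k,l})\bigr)+k^2\sin(\widetilde\mu_{k,l});
\end{equation*}
substituting this into the bracket above produces precisely the displayed expression of the lemma. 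So the missing idea is not bookkeeping but the use of the eigenvalue relation itself: your remark about ``splitting a common $\sinh(k)$ out of two of the three grouped terms'' does not describe a step that actually occurs. The rewritten form is not cosmetic either --- it is exactly what makes the lower bound in Lemma \ref{lemestcntrl} work --- so this substitution needs to be made explicit.
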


\begin{lemma}\label{lemexp}
Let $(\phi, \xi)\in  \mathbf{V}_{\sharp,n}^0(\Omega)$. Then we have 
$$ \Big(\begin{array}{l} \phi(x,y) \\ \xi(x,y) \end{array}\Big)= 
\sum_{k\in \frac{2\pi}{L}\Z}\sum_{l\in \N} \alpha_{k,l}\Big( \begin{array}{l} \phi_{k,l}(y) \\ \xi_{k,l}(y) \end{array}\Big)e^{ikx}, 
$$
where $\{\phi_{k,l}, \xi_{k,l}\}_{k\in \frac{2\pi}{L}\Z, l\in \N}$ are the eigenfunctions associated to the eigenvalue problem \eqref{eqefk}-\eqref{eqef0}. 
\end{lemma}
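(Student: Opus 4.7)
The plan is to deduce the expansion from the spectral theorem applied to the Stokes operator $A=\nu P\Delta$, combined with an $x$-Fourier decomposition.

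First, since $(\phi,\xi)\in\mathbf{L}^2_\sharp(\Omega)$ is $L$-periodic in $x$, a standard $L^2$ Fourier expansion gives
$$
\phi(x,y)=\sum_{k\in\frac{2\pi}{L}\Z}\phi_k(y)e^{ikx},\qquad \xi(x,y)=\sum_{k\in\frac{2\pi}{L}\Z}\xi_k(y)e^{ikx},
$$
with $(\phi_k,\xi_k)\in L^2(0,1)^2$ and convergence in $\mathbf{L}^2(\Omega_L)$. The constraints defining $\mathbf{V}^0_{\sharp,n}(\Omega)$ translate mode-by-mode into $ik\phi_k+\xi_k'=0$ in $(0,1)$ together with $\xi_k(0)=\xi_k(1)=0$; for $k=0$ this forces $\xi_0\equiv 0$ and leaves $\phi_0\in L^2(0,1)$ unconstrained.

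Second, Lemma \ref{analytic} gives that $(A,\mathcal{D}(A))$ is self-adjoint on $\mathbf{V}^0_{\sharp,n}(\Omega)$, and its resolvent is compact since $\mathcal{D}(A)\subset H^2(\Omega_L)$ embeds compactly into $\mathbf{L}^2(\Omega_L)$ by Rellich's theorem. The spectral theorem for self-adjoint operators with compact resolvent therefore furnishes a complete orthogonal basis of $\mathbf{V}^0_{\sharp,n}(\Omega)$ consisting of eigenfunctions of $A$. Because $A$ commutes with translations in $x$, these eigenfunctions can be chosen of the form $(\phi_k(y),\xi_k(y))e^{ikx}$ for a single $k\in\frac{2\pi}{L}\Z$, and the pair $(\phi_k,\xi_k)$ together with the pressure $q_k$ then solves \eqref{eqefk} (or \eqref{eqef0} when $k=0$). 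By Lemma \ref{lem+ev} and the elementary solution of \eqref{eqef0}, all such solutions are exactly $\{(\phi_{k,l},\xi_{k,l})\}_{l\in\N}$, with $\phi_{0,l}(y)=\sin(l\pi y)$ and $\xi_{0,l}=0$ in the $k=0$ slice.

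Combining the two steps yields the claimed expansion, with coefficients
$$
\alpha_{k,l}=\frac{\bigl\langle(\phi,\xi),\,(\phi_{k,l},\xi_{k,l})e^{ikx}\bigr\rangle_{\mathbf{L}^2(\Omega_L)}}{\|(\phi_{k,l},\xi_{k,l})e^{ikx}\|_{\mathbf{L}^2(\Omega_L)}^2}.
$$
Orthogonality for distinct values of $k$ is automatic from $\int_0^L e^{i(k-k')x}\,dx=0$ when $k\neq k'$, while orthogonality among the $(\phi_{k,l},\xi_{k,l})$ for fixed $k$ is recorded in Lemma \ref{lem+ev}(3). The only step that requires genuine work is verifying that the list of eigenvalues enumerated in Lemma \ref{lem+ev} truly exhausts the spectrum of the fiber problem \eqref{eqefk} for each $k$, so that no eigenfunction of $A$ is missed; this is exactly what the root-counting analysis of the characteristic equation \eqref{Eq-root-mutilde} carried out in Lemma \ref{lemevprop} provides. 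Once that exhaustion is in hand, the remainder is routine Hilbert-space machinery.
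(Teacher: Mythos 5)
Your argument is correct and is precisely the standard completeness argument (self-adjointness of $A$ from Lemma \ref{analytic}, compact resolvent via Rellich on $\Omega_L$, and the Fourier decomposition in $x$ into $A$-invariant fibers) that the paper implicitly relies on: Lemma \ref{lemexp} is stated there without any proof. The only inessential slip is attributing the exhaustion of the fiber spectrum to the root-counting of Lemma \ref{lemevprop}; exhaustion already follows because any eigenfunction of the fiber problem solves the constant-coefficient ODE \eqref{eqeval1} and hence is a combination of the four exponentials in \eqref{eqefxi} whose boundary conditions force the determinant condition of Lemma \ref{lem+ev}, whereas Lemma \ref{lemevprop} serves to locate and separate those roots rather than to certify completeness.
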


We have the following existence theorem for the solution of \eqref{eq:LS}.
\begin{theorem}\label{thexist}
Let $T>0$. 
For any $U_0\in  \mathbf{V}_{\sharp,n}^0(\Omega)$ with 
$\displaystyle \int_0^LU_0(x,y)\,dx= 0$, for all $y\in (0,1)$ and for any $\psi\in L^2(0,T;\dot{L}^2(0,L))$, system \eqref{eq:LS} admits a unique solution
$U$ in $C([0,T]; \mathbf{V}^{-1}_\sharp(\Omega))$ with $\displaystyle \int_0^L U(x,y,t)\,dx= 0$, for all
$(y,t)\in(0,1)\times (0,T)$.

Further, if $\psi$ vanishes near $t=T$, then the solution of \eqref{eq:LS} at $t=T$, $U(T)$ is smooth and 
in particular in $ \mathbf{V}_{\sharp,n}^0(\Omega)$.
\end{theorem}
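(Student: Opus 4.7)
Since the boundary control $\psi$ only has $L^2$ regularity in space and time, the natural framework for constructing $U$ is the transposition method of Lions--Magenes adapted to the Stokes system. For any final datum $\Phi_T\in\mathbf{V}_\sharp^1(\Om)$ the backward adjoint system \eqref{eqadj} is solved by the analytic semigroup of Lemma \ref{analytic} (applied with $T-t$ in place of $t$, using self-adjointness of $A$), and standard Stokes parabolic regularity yields $\Phi\in L^2(0,T;H^2(\Om_L))\cap C([0,T];\mathbf{V}_\sharp^1(\Om))\cap H^1(0,T;\mathbf{V}_{\sharp,n}^0(\Om))$ together with an associated pressure $q$, determined up to a time-dependent additive constant, with $\na q\in L^2(0,T;\mathbf{L}^2_\sharp)$ and hence $q\in L^2(0,T;H^1(\Om_L))$. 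In particular its trace on $\Ga_1=\{y=1\}$ belongs to $L^2(0,T;L^2(0,L))$ with a bound $\|q(\cdot,1,\cdot)\|_{L^2(0,T;L^2(0,L))}\le C\|\Phi_T\|_{\mathbf{V}_\sharp^1}$; the additive constant is harmless since $\int_0^L\psi\,dx=0$.

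Next I derive a duality identity. For smooth $U,\Phi$, multiply $\pa_t U-\nu\De U=\na p$ by $\Phi$ and integrate over $\Om_L\times(0,t)$. The term involving $\na p$ vanishes because $\mathrm{div}\,\Phi=0$ and $\Phi\cdot n=0$ on $\Ga$; the boundary contribution from the Laplacian, $-\nu\int_\Ga U\cdot\pa_n\Phi$, also vanishes, because on $\Ga_1$ one has $U=(0,\psi)$ while incompressibility of $\Phi$ combined with $\Phi|_\Ga=0$ forces $\pa_y\phi_2(x,1,t)=-\pa_x\phi_1(x,1,t)=0$, and on $\Ga_0$ the contribution is zero since $U=0$ there. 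Finally, $\int_{\Om_L}U\cdot\na q=\int_0^L q(x,1,t)\psi(x,t)\,dx$ by the divergence theorem, using $\mathrm{div}\,U=0$ and $U\cdot n=\psi$ on $\Ga_1$. Combining these with the adjoint equation gives
\begin{equation}\label{dualid}
\langle U(t),\Phi(t)\rangle=\langle U_0,\Phi(0)\rangle-\int_0^t\!\int_0^L q(x,1,s)\,\psi(x,s)\,dx\,ds.
\end{equation}

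I take \eqref{dualid} as the \emph{definition} of $U(t)\in\mathbf{V}_\sharp^{-1}(\Om)$. The right-hand side is continuous in $\Phi_T\in\mathbf{V}_\sharp^1(\Om)$: the first term is bounded by $C\|U_0\|_{\mathbf{V}_{\sharp,n}^0}\|\Phi_T\|_{\mathbf{V}_\sharp^1}$ by semigroup boundedness on $\mathbf{V}_{\sharp,n}^0$, and the second by the trace estimate on $q$ together with Cauchy--Schwarz in $s$. Strong continuity $t\mapsto U(t)$ in $\mathbf{V}_\sharp^{-1}$ follows from strong continuity of the semigroup and absolute continuity of the time integral, while uniqueness is immediate from \eqref{dualid}. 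The constraint $\int_0^L U(x,y,t)\,dx=0$ is recovered by testing \eqref{dualid} against $\Phi_T$ supported on the $k=0$ Fourier mode in $x$ and invoking the analogous assumptions on $u_0$ and $\psi$.

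For the terminal smoothing, if $\psi\equiv 0$ on $[T-\ep,T]$ then on that subinterval $U$ solves $U'=AU$ with datum $U(T-\ep)\in\mathbf{V}_\sharp^{-1}(\Om)$, so $U(T)=e^{\ep A}U(T-\ep)\in\bigcap_{n\ge 1}\mathcal{D}(A^n)$ by analyticity of the semigroup (Lemma \ref{analytic}); in particular $U(T)$ is smooth and lies in $\mathbf{V}_{\sharp,n}^0(\Om)$. The main technical obstacle in the plan is the hidden-regularity / trace bound for the adjoint pressure, $\|q(\cdot,1,\cdot)\|_{L^2(0,T;L^2(0,L))}\le C\|\Phi_T\|_{\mathbf{V}_\sharp^1}$; one can establish it either by a variational argument exploiting $\na q=-\pa_t\Phi-\nu\De\Phi\in L^2(0,T;\mathbf{L}^2_\sharp)$ and a trace inequality, or---more in keeping with the rest of this paper---by expanding $\Phi$ in the eigenfunctions of Lemmas \ref{lem+ev}--\ref{lemef}, using the explicit formula for $q_{k,l}(1)$ and the eigenvalue separation to sum mode by mode.
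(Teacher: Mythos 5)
Your argument is correct, but note that the paper itself gives no proof of Theorem \ref{thexist}: the statement is left as standard, with the remark that follows deferring to \cite{Raymond} for the regularity theory of the inhomogeneous Stokes problem. Your transposition construction is therefore a legitimate, self-contained substitute, and it is well aligned with the rest of the paper: your duality identity is exactly the identity \eqref{eqIdentity} that the paper uses in Proposition \ref{propidenty}, and your observation that the viscous boundary term $\nu\int_{\Gamma}U\cdot\partial_n\Phi$ vanishes because $\partial_y\phi_2=-\partial_x\phi_1=0$ on $\Gamma_1$ is precisely the point recorded in the paper's remark that only the pressure survives in the normal stress. You also correctly isolate the one genuinely nontrivial ingredient, namely the admissibility (hidden regularity) bound $\|q(\cdot,1,\cdot)\|_{L^2(0,T;L^2(0,L))}\le C\|\Phi_T\|_{\mathbf{V}_\sharp^1}$, which is the upper-bound counterpart of the observability inequality of Proposition \ref{propobs}; either of your two suggested routes (maximal regularity giving $\nabla q\in L^2(0,T;\mathbf{L}^2_\sharp)$ plus the trace theorem, or a mode-by-mode summation using Lemma \ref{lemef}) works. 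Two small presentational points: to define $U(t)$ as an element of $\mathbf{V}_\sharp^{-1}(\Omega)$ for \emph{every} $t$ you should pose the adjoint problem backward from time $t$ with arbitrary terminal datum $\Phi(t)=\Phi_T\in\mathbf{V}_\sharp^1(\Omega)$, rather than pairing only against $\Phi(t)=e^{(T-t)A}\Phi_T$, whose range is merely dense; and the uniqueness claim is uniqueness within the class of transposition solutions, which is the appropriate notion here since the theorem does not fix a stronger solution concept. Neither point affects the validity of the argument.
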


\begin{remark}\hglue 1in
\begin{enumerate}
\item We note that $\int_0^L v_0(x,y)\,dx$ is automatically zero due to the boundary condition at the bottom wall and the incompressibility condition. Hence the condition on the average of $U_0=(u_0,v_0)$ is really just a condition on $u_0$.
\item We do not claim that the regularity of the solution as stated in the preceding theorem is optimal (for a more careful discussion of regularity for solutions of inhomogeneous Stokes problems, see \cite{Raymond}). Moreover, the choice
of function spaces is not essential. If we leave our system uncontrolled for a short period of time, the solution will become
infinitely smooth, so we could have assumed in the first place that our initial data are smooth and then chosen a control which is
also smooth.
\item If we only know $U\in C([0,T],\mathbf{V}^{-1}_\sharp(\Omega))$, we can interpret the vanishing of the integral over $x$ as the vanishing of the $k=0$ Fourier component. However, for $t>0$ and $y<1$, $U$ is actually of class $C^\infty$, and the integral is defined in the classical sense.
\end{enumerate}
\end{remark}

\section{Null controllability}\label{sec3}
In this section, we study the null controllability of system \eqref{eq:LS}. 
In particular, we have the following result:
\begin{theorem}\label{thnull}
Let $T>0$. For any $U_0\in \mathbf{V}_{\sharp,n}^0(\Omega)$ with 
$\displaystyle \int_0^LU_0(x,y)\,dx= 0$, for all $y\in (0,1)$, there exists a control $\psi\in L^2(0,T; \dot{L}^2(0,L))$, 
such that the solution of \eqref{eq:LS} reaches zero at $t=T$. 
\end{theorem}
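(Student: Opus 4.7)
The plan is to prove Theorem \ref{thnull} via the Hilbert Uniqueness Method. A standard integration by parts in $\Omega_L\times(0,T)$, using the boundary conditions on $U$ and $\Phi$ together with the identity $\partial_y\xi|_{y=1}=0$ (which follows from $\phi|_{y=1}=0$ combined with incompressibility $ik\phi+\xi'=0$), produces the duality identity
$$\int_\Omega \Phi(0)\cdot U_0\, dx\, dy - \int_\Omega \Phi_T\cdot U(T)\, dx\, dy = \int_0^T\!\!\int_0^L q(x,1,t)\,\psi(x,t)\, dx\, dt,$$
where $q$ is the pressure of the adjoint $\Phi=(\phi,\xi)$ solving \eqref{eqadj}. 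By HUM, Theorem \ref{thnull} then reduces to the observability inequality
$$\|\Phi(0)\|^2_{\mathbf{V}_{\sharp,n}^0(\Omega)}\,\leq\, C_T\int_0^T\!\!\int_0^L |q(x,1,t)|^2\, dx\, dt$$
for every $\Phi_T\in \mathbf{V}_{\sharp,n}^0(\Omega)$ whose $k=0$ Fourier component vanishes. The hypothesis $\int_0^L u_0(x,y)\,dx=0$ is precisely what makes the pairing $\langle U_0,\Phi(0)\rangle$ well defined on this subspace and eliminates the uncontrollable directions $(\sin(n\pi y),0)$ identified after \eqref{remintromiss}.

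Next I would exploit the $x$-translation invariance to decompose $\Phi(x,y,t)=\sum_{k\in(2\pi/L)\Z\setminus\{0\}}\Phi^{(k)}(y,t)\,e^{ikx}$ (and similarly for $q$). The equations decouple in $k$, and by Parseval both sides of the observability inequality split orthogonally in $k$. It therefore suffices to prove, for every $k\neq 0$, a modewise observability inequality
$$\|\Phi^{(k)}(\cdot,0)\|^2_{L^2(0,1)^2}\,\leq\, C_T\int_0^T|q_k(1,t)|^2\, dt$$
with a constant $C_T$ \emph{independent of $k$}; this is the key Lemma \ref{lemobsk} flagged in the introduction.

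To prove this modewise inequality, I would expand $\Phi^{(k)}(y,t)$ in the orthogonal eigenbasis $\{(\phi_{k,l},\xi_{k,l})\}_{l\in\N}$ supplied by Lemmas \ref{lem+ev} and \ref{lemexp}, so that
$$\Phi^{(k)}(y,t)=\sum_{l\in\N}a_{k,l}\,e^{\lambda_{k,l}(T-t)}\bigl(\phi_{k,l}(y),\xi_{k,l}(y)\bigr),$$
and the observation becomes the Dirichlet-type exponential series
$$q_k(1,t)=\sum_{l\in\N}a_{k,l}\,q_{k,l}(1)\,e^{\lambda_{k,l}(T-t)},\qquad q_{k,l}(1)=-\tfrac{\nu}{k^2}\xi'''_{k,l}(1),$$
by Lemma \ref{lemef}. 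The uniform spectral gap $\inf_{k,l}(\lambda_{k,l}-\lambda_{k,l+1})>C>0$ and the summability $\sum_l 1/(-\lambda_{k,l})<\infty$ in Lemma \ref{lemevprop}(5) are precisely the hypotheses required to invoke the M\"untz--Sz\'asz theorem, yielding for each fixed $k$ a biorthogonal family $\{\Psi_{k,l}\}_l\subset L^2(0,T)$ to the exponentials $\{e^{\lambda_{k,l}(T-\cdot)}\}_l$, so that $a_{k,l}\,q_{k,l}(1)=\int_0^T q_k(1,t)\,\Psi_{k,l}(t)\, dt$ and an observability bound follows by summing in $l$.

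The hard part, and the crux of the paper, is uniformity in $k$, which requires two ingredients. First, after normalizing the eigenfunctions in $L^2(0,1)^2$, one needs a lower bound $|q_{k,l}(1)|\ge c>0$ uniformly in $k,l$; this should be extracted by combining the explicit formula for $\xi'''_{k,l}(1)$ in Lemma \ref{lemef} with the root equation \eqref{Eq-root-mutilde} and the asymptotic control of $\widetilde\mu_{k,l}$ in Lemma \ref{lemevprop}. Second, one needs uniform-in-$k$ control of $\|\Psi_{k,l}\|_{L^2(0,T)}$; the dangerous regime is $|k|$ large, where the spectrum $\{\lambda_{k,l}\}_l$ is pushed towards $-\infty$, but this very drift produces the exponential damping factor $e^{2\lambda_{k,l}T}$ in the left-hand-side norm $\|\Phi^{(k)}(0)\|^2$ which swamps any polynomial-in-$|\lambda_{k,l}|$ growth of the M\"untz--Sz\'asz biorthogonal norms. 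Assembling these estimates gives the modewise inequality with a constant $C_T$ that depends only on $T$, and then Parseval reassembles the global observability inequality with a $k$-independent constant. HUM then produces the desired control $\psi\in L^2(0,T;\dot{L}^2(0,L))$ driving $U$ to zero at time $T$, completing the proof of Theorem \ref{thnull}.
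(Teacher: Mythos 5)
Your proposal is correct and follows essentially the same route as the paper: a duality/HUM reduction to an observability inequality, Fourier decomposition in $x$, modewise eigenfunction expansion, and a M\"untz--Sz\'asz biorthogonal argument in which the uniform spectral gap, the summability of $1/(-\lambda_{k,l})$, and the lower bound on $|\xi'''_{k,l}(1)|$ (matched against the $e^{2\lambda_{k,l}T}|\lambda_{k,l}|^2 e^{2|k|}|\mu_{k,l}|^2$ growth of the eigenfunction norms) supply the $k$-independence of the constant. The only cosmetic difference is that the paper switches the control off on $[T_0,T)$ to sidestep regularity issues at the endpoints, a detail your sketch omits but which does not change the argument.
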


To prove the above theorem, it is enough to show the following result holds.
\begin{proposition}\label{propidenty}
Let $T>T_0>0$.
Let us assume that there exists an operator $B\in \mathcal{L}(L^2(0,T_0; \dot{L}^2(0,L)), \mathbf{L}^2_\sharp(\Omega))$ such that
\begin{equation}\label{cntrl1}
B(q(\cdot,1,\cdot)|_{(0,L)\times(0,T_0)})= \Phi(\cdot, \cdot,0), 
\end{equation}
where $(\Phi, q)$ is the solution of the adjoint problem \eqref{eqadj} with terminal condition $\Phi_T\in \mathbf{V}_{\sharp,n}^0(\Omega)$ with 
$\displaystyle \int_0^L \Phi_T(x,y)\,dx= 0$, for all $y\in (0,1)$. 
Set 
\begin{equation}\label{eqcntrl1}
\psi= \left\{\begin{array}{l} B^*U_0 \quad \mathrm{in} \quad (0,L)\times (0,T_0), \\ 0 \quad \mathrm{in} \quad (0,L)\times [T_0,T). \end{array}\right.
\end{equation}
Then, for any $T>0$ and for any $U_0\in \mathbf{V}_{\sharp,n}^0(\Omega)$ with 
$\displaystyle \int_0^LU_0(x,y)\,dx= 0$, for all $y\in (0,1)$, $U$, the solution of \eqref{eq:LS} with control 
$\psi$ defined in \eqref{eqcntrl1} and initial condition $U_0$, 
reaches zero at $t=T$. 
\end{proposition}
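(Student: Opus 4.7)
The plan is to carry out the standard duality (HUM) argument. Starting from a smooth pair $(U,p)$ solving \eqref{eq:LS} with control $\psi$ and a smooth pair $(\Phi,q)$ solving the adjoint \eqref{eqadj} with terminal data $\Phi_T$ of zero $x$-average, I would multiply the momentum equation of \eqref{eq:LS} by $\Phi$ and integrate by parts over $\Omega_L \times (0,T)$. Because $\mathrm{div}\,\Phi = 0$ and $\Phi|_\Gamma = 0$, the pressure contribution vanishes. In the viscous term, the leftover boundary integral $\int \psi\,\partial_y\xi|_{y=1}$ (writing $\Phi = (\phi,\xi)$) also vanishes, because $\phi|_{y=1}=0$ forces $\partial_x\phi|_{y=1}=0$, and incompressibility then gives $\partial_y\xi|_{y=1}=0$. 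After using the adjoint equation to replace $\partial_t\Phi+\nu\Delta\Phi$ by $-\nabla q$ and integrating once more, the only surviving boundary term is $\int_{\Gamma_1} q\,(U\cdot n) = \int_0^L q|_{y=1}\,\psi\,dx$. This yields the key identity
\[
\langle U(T), \Phi_T\rangle - \langle U_0, \Phi(\cdot,\cdot,0)\rangle = -\int_0^T\!\!\int_0^L q(x,1,t)\,\psi(x,t)\,dx\,dt.
\]

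Next I would insert the control defined in \eqref{eqcntrl1}. Since $\psi\equiv 0$ on $[T_0,T)$, the right-hand integral reduces to an integral over $(0,T_0)$. By definition of $\psi = B^* U_0$ on $(0,T_0)$, the adjoint relation, and the hypothesis \eqref{cntrl1}, this integral equals
\[
\langle B^* U_0,\, q(\cdot,1,\cdot)|_{(0,L)\times(0,T_0)}\rangle_{L^2} = \langle U_0, B(q|_{\Gamma_1\times(0,T_0)})\rangle = \langle U_0, \Phi(\cdot,\cdot,0)\rangle.
\]
Substituting back, the two occurrences of $\langle U_0,\Phi(\cdot,\cdot,0)\rangle$ cancel and we are left with $\langle U(T), \Phi_T\rangle = 0$ for every admissible terminal datum $\Phi_T$.

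Finally, since $\psi\equiv 0$ near $t=T$, the second part of Theorem \ref{thexist} guarantees that $U(T)\in\mathbf{V}^0_{\sharp,n}(\Omega)$ and is in fact smooth; the constraint $\int_0^L U(x,y,T)\,dx=0$ is preserved by the dynamics, exactly as in the discussion around \eqref{remintromiss}. Letting $\Phi_T$ range over a dense subset of the admissible class, the identity $\langle U(T), \Phi_T\rangle = 0$ forces $U(T)=0$ in $\mathbf{V}^0_{\sharp,n}(\Omega)$, which is the desired null controllability.

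The main technical hurdle in executing this plan is justifying the boundary integration by parts at the regularity supplied by Theorem \ref{thexist}: the primal solution is only known to lie in $C([0,T];\mathbf{V}^{-1}_\sharp(\Omega))$, and the boundary trace $q|_{y=1}$ of the adjoint pressure must be given a meaning in $L^2$. I would handle this by first establishing the identity on a dense smooth subclass of terminal data (e.g. $\Phi_T \in \mathcal{D}(A^k)$ for $k$ large, where the adjoint is classical and $q|_{y=1}$ is automatic), then passing to the limit using continuity of the solution map and of $B$. Once that density step is in place, the rest of the argument is straightforward bookkeeping.
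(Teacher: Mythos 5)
Your proposal is correct and follows essentially the same route as the paper: derive the duality identity $\int_0^T\int_0^L \psi\, q(x,1,t)\,dx\,dt = \langle U_0,\Phi(\cdot,\cdot,0)\rangle - \langle U(T),\Phi_T\rangle$ by integration by parts, insert $\psi=B^*U_0$ on $(0,T_0)$ and $\psi=0$ afterwards, use \eqref{cntrl1} to cancel the $\langle U_0,\Phi(\cdot,\cdot,0)\rangle$ terms, and conclude $\langle U(T),\Phi_T\rangle=0$ for all admissible $\Phi_T$, hence $U(T)=0$. The points you flag as technical hurdles (vanishing of the viscous boundary term via incompressibility, and regularity of $U$ near $t=T$ versus regularity of the adjoint on $[0,T_0]$) are exactly what the paper handles by the choice $T_0<T$ and records in the remark following the proposition.
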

\begin{proof}
Let us note that since $\psi$, defined in \eqref{eqcntrl1}, vanishes near $T$, 
from the last part of Theorem \ref{thexist}, it follows that at $t=T$, 
$U(\cdot)$, the solution of \eqref{eq:LS} with control $\psi$, is in $\mathbf{L}^2_\sharp(\Omega)$ (in fact $C^\infty$ smooth). 
Multiplying \eqref{eq:LS} with $\Phi$, the solution of the adjoint problem \eqref{eqadj} with terminal condition $\Phi_T\in \mathbf{V}_{\sharp,n}^0(\Omega)$ with 
$\displaystyle \int_0^L \phi_T(x,y)\,dx= 0$, for all $y\in (0,1)$, and using an integration by parts, we obtain the identity
\begin{eqnarray}\label{eqIdentity}
\int_0^T \int_0^L \psi(x,t) q(x,1,t)\, dx\, dt=\\  \langle U_0(\cdot, \cdot), \Phi(\cdot, \cdot,0)\rangle_{\mathbf{L}^2_\sharp(\Omega)}-\langle U(\cdot, \cdot,T), \Phi_T(\cdot, \cdot)\rangle_{\mathbf{L}^2_\sharp(\Omega)}. \nonumber
\end{eqnarray}
Now, using $\psi$ defined in \eqref{eqcntrl1}, from \eqref{eqIdentity}, we obtain 
$$ \langle U(\cdot, \cdot,T), \Phi_T(\cdot, \cdot)\rangle_{\mathbf{L}^2_\sharp(\Omega)}=0, \quad \forall \, \Phi_T\in \mathbf{V}_{\sharp,n}^0(\Omega),$$
and hence $U(\cdot, \cdot, T)=0$. 
\end{proof}
\begin{remark}
\begin{enumerate}
\item We note that only the pressure $q$ appears in the normal component of stress. The viscous stress vanishes as a result of the divergence condition.
\item We have introduced $T_0<T$ only to avoid any technical issues related to lack of regularity of the solution of the adjoint equation. By choosing $T_0<T$, we ensure that the solutions of the adjoint problem are in fact $C^\infty$ smooth for $t\in [0,T_0]$. Therefore, at any time, either $U$ or $\Phi$ is $C^\infty$ smooth in the preceding proposition. Choosing $T>T_0$ also guarantees that the integral of $\|q\|^2$ in the following proposition is finite. We note that a posteriori at is clear that $U$ in Proposition \ref{propidenty} actually vanishes at $T_0$, since backward uniqueness holds for the Stokes equations.
\end{enumerate}
\end{remark}
Next we prove the existence of the bounded operator $B$, defined in \eqref{cntrl1}, by showing that 
the observability inequality associated to the null control problem of \eqref{eq:LS} holds. 
\begin{proposition}\label{propobs}
Let us assume that $B$ is as defined by (\ref{cntrl1}). 
For any $T>T_0>0$, $B\in \mathcal{L}(L^2(0,T_0; \dot{L}^2(0,L)), \mathbf{L}^2_\sharp(\Omega))$, i.e, there exists a positive constant $C(T_0)>0$, such that 
\begin{equation}\label{eqobs}
 \displaystyle
\int_0^{T_0} \|q(\cdot,1,t)\|^2_{\dot{L}^2(0,L)}\, dt \geq C(T_0) \|(\phi, \xi)(\cdot,\cdot,0)\|^2_{\mathbf{L}^2_{\sharp}(\Omega)},
\end{equation}
where $(\phi, \xi,q)$ is the solution of \eqref{eqadj} with terminal condition $(\phi_T, \xi_T)\in \mathbf{V}_{\sharp,n}^0(\Omega)$ satisfying 
$\displaystyle \int_0^L \phi_T(x,y)\, dx\allowbreak=0, \quad \forall\, y\in (0,1)$. 
\end{proposition}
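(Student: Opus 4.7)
The plan is to exploit the self-adjointness of the Stokes operator (Lemma~\ref{analytic}) and the spectral information of Lemmas~\ref{lem+ev}--\ref{lemef} to reduce \eqref{eqobs} to a parabolic Ingham--type inequality for each horizontal Fourier wave number $k\neq 0$, and then to prove that this mode-wise inequality holds with a constant independent of $k$.

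\textbf{Step 1: Spectral reduction.} The zero $x$-average hypothesis on $\Phi_T$ kills the $k=0$ Fourier component. Using the basis of Lemma~\ref{lemexp} on each $x$-Fourier slice together with self-adjointness of $A$, one obtains
\[
(\phi,\xi)(x,y,t)=\sum_{k\neq 0}\sum_{l}a_{k,l}\,e^{\lambda_{k,l}(T-t)}(\phi_{k,l},\xi_{k,l})(y)\,e^{ikx},
\]
\[
q(x,1,t)=\sum_{k\neq 0}\sum_{l}a_{k,l}\,q_{k,l}(1)\,e^{\lambda_{k,l}(T-t)}\,e^{ikx}.
\]
Plancherel in $x$ and the orthogonality of $\{(\phi_{k,l},\xi_{k,l})\}_{l}$ in $(L^{2}(0,1))^{2}$ (Lemma~\ref{lem+ev}(3)) decouple \eqref{eqobs} into the following claim, to be proved uniformly in $k\neq 0$:
\[
\int_0^{T_0}\Bigl|\sum_{l}a_{k,l}\,q_{k,l}(1)\,e^{\lambda_{k,l}(T-t)}\Bigr|^{2}dt \;\geq\; C(T_0)\sum_{l}|a_{k,l}|^{2}e^{2\lambda_{k,l}T}\|(\phi_{k,l},\xi_{k,l})\|^{2}_{(L^{2}(0,1))^{2}}.
\]

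\textbf{Step 2: Mode-wise inequality via Müntz--Szász.} Fix $k\neq 0$, set $\Lambda_l=-\lambda_{k,l}>0$, and introduce $c_l=a_{k,l}q_{k,l}(1)e^{-\Lambda_l(T-T_0)}$. The substitution $\sigma=T_0-t$ converts the left-hand side into $\int_0^{T_0}|\sum_l c_l e^{-\Lambda_l\sigma}|^{2}d\sigma$, and the weight $e^{-2\Lambda_l T}$ on the right-hand side becomes $e^{-2\Lambda_l T_0}|q_{k,l}(1)|^{-2}$ times $|c_l|^{2}$. Lemma~\ref{lemevprop}(5) supplies the two hypotheses underlying the Müntz--Szász theorem, namely a uniform positive spectral gap $\Lambda_{l+1}-\Lambda_l>C$ and the summability $\sum_{l}1/\Lambda_l<\infty$. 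The family $\{e^{-\Lambda_l\sigma}\}$ therefore admits a bi-orthogonal family $\{p^{k}_l\}\subset L^2(0,T_0)$, and the quantitative Fattorini--Russell estimates on $\|p^{k}_l\|$ deliver the parabolic Ingham inequality
\[
\int_0^{T_0}\Bigl|\sum_l c_l e^{-\Lambda_l\sigma}\Bigr|^{2}d\sigma \;\geq\; \widetilde C(T_0)\sum_{l}|c_l|^{2}e^{-2\Lambda_l T_0}.
\]
Unwinding the change of variables produces the mode-wise inequality of Step~1 with constant proportional to $\widetilde C(T_0)\cdot\inf_{l}\bigl(|q_{k,l}(1)|^{2}/\|(\phi_{k,l},\xi_{k,l})\|^{2}\bigr)$.

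\textbf{Step 3: Uniformity in $k$ -- the main obstacle.} The whole argument hinges on bounding $C(T_0)$ below independently of $k$, and this splits into two uniformity claims. First, the Fattorini--Russell constant $\widetilde C(T_0)$ must be uniform in $k$; this is possible because it depends on $\{\Lambda_l\}_l$ only through the gap and the tail sum, both controlled uniformly in $k$ by Lemma~\ref{lemevprop}(5). Second, and more delicately, one needs a lower bound $|q_{k,l}(1)|^{2}\geq c\,\|(\phi_{k,l},\xi_{k,l})\|^{2}$ that is uniform in $k\in\frac{2\pi}{L}\Z\setminus\{0\}$ and $l\in\N$. I would establish this by combining the explicit formula for $q_{k,l}(1)$ in Lemma~\ref{lemef}(2) with the representation \eqref{eqefxi} and the divergence relation $\phi_{k,l}=i\xi'_{k,l}/k$, treating separately the high-frequency regime $|k|\geq k_{0}$ via the asymptotic location of $\widetilde\mu_{k,l}$ in Lemma~\ref{lemevprop}(1) and the finitely many remaining low wave numbers $|k|<k_0$ via compactness together with Lemma~\ref{lemevprop}(2). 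Once this uniform ratio bound is in hand, Parseval in $k$ reassembles the mode-wise inequalities into \eqref{eqobs}; I expect the verification of the uniform ratio bound to be the most technically demanding part.
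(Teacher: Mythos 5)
Your proposal follows essentially the same route as the paper: Fourier decomposition in $x$ reducing \eqref{eqobs} to a mode-wise inequality (the paper's Lemma \ref{lemobsk}), a biorthogonal/M\"untz--Sz\'asz parabolic Ingham estimate whose constant is uniform in $k$ thanks to the gap and summability conditions of Lemma \ref{lemevprop}(5), and a $k$- and $l$-uniform lower bound on $|q_{k,l}(1)|^2/\|(\phi_{k,l},\xi_{k,l})\|^2$, which the paper obtains by pairing the upper bound \eqref{eqobsest1} with the lower bound $|\xi'''_{k,l}(1)|\ge Mk^2e^{|k|}|\lambda_{k,l}||\mu_{k,l}|$ of Lemma \ref{lemestcntrl}, proved exactly as you suggest by splitting into $|k|$ large and the finitely many small wave numbers. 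The argument is correct and correctly identifies the uniform ratio bound as the technical crux.
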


Let us consider the series expansion of $(\phi, \xi, q)$ 
\begin{eqnarray}\label{eqexp1}
&&\phi(x,y,t)= \sum_{k\in \frac{2\pi}{L}\Z-\{0\}} \phi_k(y,t) e^{ikx}, \quad \xi(x,y,t)= \sum_{k\in \frac{2\pi}{L}\Z-\{0\}} \xi_k(y,t) e^{ikx}, \nonumber\\
&&q(x,y,t)= \sum_{k\in \frac{2\pi}{L}\Z-\{0\}} q_k(y,t) e^{ikx}, \\
&&\phi_T(x,y)= \sum_{k\in \frac{2\pi}{L}\Z-\{0\}} \sum_{l\in\N} \alpha_{k,l}\phi_{k,l}(y) e^{ikx},\nonumber\\
&&\xi_T(x,y)= \sum_{k\in \frac{2\pi}{L}\Z-\{0\}} \sum_{l\in \N} \alpha_{k,l} \xi_{k,l}(y) e^{ikx},\nonumber
\end{eqnarray}
where $(\phi_{k,l}, \xi_{k,l})$ is the solution of the eigenvalue problem \eqref{eqefk} for all $k\in \frac{2\pi}{L}\Z-\{0\}$ and $l\in \N$.

Using the orthogonality of $\{e^{ikx}\}_{k\in \frac{2\pi}{L}\Z-\{0\}}$, from \eqref{eqadj} we can derive that for all $k\in \frac{2\pi}{L}\Z-\{0\}$, 
$(\phi_k, \xi_k, q_k)$ satisfies
\begin{eqnarray}\label{eqadjsysk}
 -\pa_t \phi_k(y,t)=\nu[-k^2 \phi_k(y,t)+ \phi_k^{''}(y,t)] + ik q_k(y,t), \quad \forall \, (y,t)\in (0,1)\times (0,T), \nonumber\\
 -\pa_t \xi_k(y,t)=\nu[-k^2 \xi_k(y,t)+ \xi_k^{''}(y,t)] + q'_k(y,t), \quad \forall \, (y,t)\in (0,1)\times (0,T),\nonumber\\
ik\phi_k+ \xi'_k = 0, \quad \forall \, (y,t)\in (0,1)\times (0,T), \\
\phi_k(0,t)= \phi_k(1, t)=0=\xi_k(0,t)=\xi_k(1,t), \quad \forall \, t\in (0,T), \nonumber\\
\phi_k(\cdot, T)= \sum_{l\in \N} \alpha_{k,l} \phi_{k,l}(y), \quad \xi_k(\cdot,T)=\sum_{l\in \N} \alpha_{k,l}\xi_{k,l}(y), \quad \forall \, y\in (0,1). \nonumber
\end{eqnarray}
Using this system, to prove the observability inequality \eqref{eqobs}, it is sufficient to show that
the observability inequality for each $k\in \frac{2\pi}{L}\Z-\{0\}$ holds with a positive constant $C(T_0)$, independent of $k$. The following lemma gives
an estimate for the eigenfunctions which turns out to be crucial in establishing this.

\begin{lemma}\label{lemestcntrl}
Let us recall from Lemma \ref{lemef}, the representation of $\xi_{k,l}$ for all $k\in \frac{2\pi}{L}\Z-\{0\}$ and $l\in \N$. 
There exists a positive constant $M$, independent of $k$ and $l$, such that
\begin{equation}\label{eqobsest2} |\xi^{'''}_{k,l}(1)|\ge M k^2e^{|k|} |\lambda_{k,l}||\mu_{k,l}|.\end{equation} 
\end{lemma}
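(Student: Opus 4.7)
The plan is to start from the explicit formula for $\xi'''_{k,l}(1)$ in Lemma~\ref{lemef}(2) and simplify it using the characteristic equation~\eqref{Eq-root-mutilde}. Solving \eqref{Eq-root-mutilde} for the quantity $\widetilde\mu_{k,l}\cdot\frac{2k(1-\cosh(k)\cos\widetilde\mu_{k,l})}{\sinh(k)}=\sin\widetilde\mu_{k,l}\,(\widetilde\mu_{k,l}^2-k^2)$ and substituting back collapses the bracket in Lemma~\ref{lemef}(2) cleanly to $\widetilde\mu_{k,l}(\widetilde\mu_{k,l}\sin\widetilde\mu_{k,l}+k\sinh k)$. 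Since $|\mu_{k,l}|=\widetilde\mu_{k,l}$, the inequality \eqref{eqobsest2} therefore reduces to producing a constant $C>0$, independent of $k$ and $l$, such that
\[
|\widetilde\mu_{k,l}\sin\widetilde\mu_{k,l}+k\sinh k|\ \geq\ C\,|k|\,e^{|k|}.
\]

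To extract the exponential growth factor, I would apply \eqref{Eq-root-mutilde} a second time, now dividing by $\widetilde\mu_{k,l}\sinh k$ to solve for $\widetilde\mu_{k,l}\sin\widetilde\mu_{k,l}$, adding $k\sinh k$ to both sides, and using $\sinh^2 k=\cosh^2 k-1$. This produces the key identity
\[
\widetilde\mu\sin\widetilde\mu+k\sinh k\ =\ \frac{k\bigl[(\cosh k-\cos\widetilde\mu)^2+\sin^2\widetilde\mu\bigr]}{\sinh k}\ +\ \frac{k^2\sin\widetilde\mu}{\widetilde\mu},
\]
with the abbreviation $\widetilde\mu=\widetilde\mu_{k,l}$. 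The first term on the right has the positive sign of $k/\sinh k$ and is bounded below by $|k|(\cosh|k|-1)^2/\sinh|k|=2|k|\sinh^2(|k|/2)\tanh(|k|/2)$, which grows like $\tfrac{1}{2}|k|e^{|k|}$ for large $|k|$. The second term is bounded in modulus by $k^2/\widetilde\mu$.

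For $|k|\geq k_0$ with $k_0$ sufficiently large, Lemma~\ref{lemevprop}(1)(d) forces $\widetilde\mu_{k,l}\geq\pi$ for every $l\in\N$, so $k^2/\widetilde\mu$ is exponentially dominated by $\tfrac12|k|e^{|k|}$ and the triangle inequality delivers the required bound with a uniform constant. For the finitely many remaining $k\in\tfrac{2\pi}{L}\Z-\{0\}$ with $|k|<k_0$, the target $|k|e^{|k|}$ is itself bounded, so it suffices to exhibit a uniform positive lower bound for $|\widetilde\mu_{k,l}\sin\widetilde\mu_{k,l}+k\sinh k|$ over $l\in\N$. For each such fixed $k$, Lemma~\ref{lemevprop}(4) yields $\widetilde\mu_{k,l}\to\infty$ as $l\to\infty$, so the second term vanishes while the first remains above the positive constant $|k|(\cosh|k|-1)^2/\sinh|k|$. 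Only finitely many pairs $(k,l)$ then survive, for which the identity shows the bracket can vanish only if $\widetilde\mu\bigl[(\cosh k-\cos\widetilde\mu)^2+\sin^2\widetilde\mu\bigr]=-k\sinh k\sin\widetilde\mu$; a magnitude comparison combined with the root-location results of Lemma~\ref{lemevprop}(2) rules this out, and taking the minimum over the finite set gives the desired positive constant.

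The main obstacle is the uniformity over $k$ and $l$ simultaneously: one has to prevent cancellation between the exponentially large first term and the polynomially bounded second term. The exponential separation between $\cosh k$ and $1$ makes the large-$|k|$ regime automatic; the delicate point is the residual finite set at low frequencies, where a direct nonvanishing check powered by the geometric information in Lemma~\ref{lemevprop} is required.
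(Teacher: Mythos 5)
Your proof is correct and follows essentially the same route as the paper's. Your two applications of the characteristic equation \eqref{Eq-root-mutilde} are an exact algebraic repackaging of the paper's argument: the identity
\[
\widetilde\mu\sin\widetilde\mu+k\sinh k=\frac{k\bigl[(\cosh k-\cos\widetilde\mu)^2+\sin^2\widetilde\mu\bigr]}{\sinh k}+\frac{k^2\sin\widetilde\mu}{\widetilde\mu}
\]
reproduces, after multiplication by $\widetilde\mu$, precisely the paper's lower bound $(\cosh k-1)^2/|\sinh k|$ for the dominant term, and your case split (uniform estimate for $|k|\ge k_0$; fixed small $k$ with $l$ large via $\widetilde\mu_{k,l}\to\infty$; a residual finite set of modes) is the paper's three-step structure verbatim. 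The one place you are not actually done is the residual finite set: your magnitude comparison only shows that vanishing of $\widetilde\mu\sin\widetilde\mu+k\sinh k$ would force $\widetilde\mu\le k\sinh k/(\cosh k-1)^2$, which for the small lattice frequencies $|k|=2\pi/L$ can be a large number, and Lemma \ref{lemevprop}(2) locates only the high roots near $l\pi$, so it does not exclude a low root in that range. This is exactly the point where the paper itself is not self-contained but invokes the nonvanishing $\xi'''_{k,l}(1)\ne0$ from the cited works of Munteanu and Chowdhury--Ervedoza; you should appeal to the same fact rather than suggest the root-location lemma closes it.
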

\begin{proof}
From Lemma \ref{lemef}, since we have 
$$\xi^{'''}_{k,l}(1) = -i4k \frac{\lambda_{k,l}}{\nu} \Big[ \widetilde{\mu}_{k,l}\Big\{ \frac{2k}{\sinh(k)}(1-\cosh(k)\cos(\widetilde{\mu}_{k,l}))+ k \sinh(k)\Big\}+k^2\sin(\widetilde{\mu}_{k,l})\Big],
$$
then we get:
\begin{enumerate}
 \item There exists a large $\widehat{k}\in \N$, such that for all $|k|>\widehat{k}$ and for all $l\in \N$, we have 
$$
|\xi^{'''}_{k,l}(1)|\ge M_* k^2e^{|k|} |\lambda_{k,l}||\mu_{k,l}|,
$$
for some $M_*>0$. 

\item For $|k|\le \widehat{k}$, $k\neq 0$, using the fact that $\widetilde{\mu}_{k,l}-(l-N_k+l_k)\pi \rightarrow 0$ as 
$l\rightarrow\infty$ from 2.c) in Lemma \ref{lemevprop}, we find positive constants $\widehat{l}_k$ and $\widehat{M}_k$, independent of $l$, 
such that  
$$
|\xi^{'''}_{k,l}(1)|\ge \widehat{M}_k k^2e^{|k|} |\lambda_{k,l}||\mu_{k,l}|, \quad \forall \, l>\widehat{l}_k.
$$
To see this, we use the following estimate:
\begin{eqnarray}
| \frac{2}{\sinh(k)}(1-\cosh(k)\cos(\widetilde{\mu}_{k,l}))+  \sinh(k)|\ge {2+\sinh^2k-2\cosh k\over|\sinh k|}\nonumber\\={(\cosh k-1)^2\over|\sinh k|}.
\end{eqnarray}

This also shows that $\xi'''_{k,l}(1) \neq 0$ for all $k\in \frac{2\pi}{L}\Z-\{0\}$, and $l\in \N$ (see also Lemma 4.1 in \cite{Munteanu} and Proposition 2.1 in \cite{chowdhury}). Thus
there exists a positive $M_k$ independent of $l$, such that 
$$
|\xi^{'''}_{k,l}(1)|\ge M_k k^2e^{|k|} |\lambda_{k,l}||\mu_{k,l}|, \quad \forall \, l\in \N, \quad \forall \, |k|\le \widehat{k}, \, k\neq 0.
$$

\item Finally, taking $M= \min\{ M_*, M_k, \quad |k|\le \widehat{k}, \, k\neq 0\}$ (which is a positive number),
we obtain that 
$$|\xi^{'''}_{k,l}(1)|\ge M k^2e^{|k|} |\lambda_{k,l}||\mu_{k,l}|, \quad \forall \, k\in \frac{2\pi}{L}\Z-\{0\}, \quad \forall\, l\in \N.$$
\end{enumerate}
\end{proof}

\begin{lemma}\label{lemobsk}
For any $T>T_0>0$, there exists a positive constant $C(T_0)>0$, independent of $k$, such that for all $k\in \frac{2\pi}{L}\Z-\{0\}$, 
\begin{equation}\label{eqobsk}
 \displaystyle
\int_0^{T_0} |q_k(1,t)|^2\, dt \geq C(T_0)\|(\phi_k, \xi_k)(\cdot,0)\|^2_{L^2(0,1)},
\end{equation}
where $(\phi_k, \xi_k,q_k)$ is the solution of \eqref{eqadjsysk} with terminal condition $(\phi_k(\cdot,T), \xi_k(\cdot,T))\in (L^2(0,1))^2$. 
\end{lemma}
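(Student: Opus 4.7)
The plan is to diagonalise the mode-$k$ adjoint system \eqref{eqadjsysk} in the eigenfunction basis of Section \ref{sec2}, then decouple the Fourier coefficients of the boundary observation via a Müntz-Szász biorthogonal family, and finally verify that every $k$-dependent factor balances so that $C(T_0)$ is uniform in $k$.

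First, I would expand the terminal datum as $(\phi_k,\xi_k)(\cdot,T)=\sum_{l\in\N} \alpha_{k,l}(\phi_{k,l},\xi_{k,l})$, so that the backward-in-time solution of \eqref{eqadjsysk} reads
\begin{equation*}
(\phi_k,\xi_k)(y,t)=\sum_{l\in\N}\alpha_{k,l}\, e^{\lambda_{k,l}(T-t)}(\phi_{k,l}(y),\xi_{k,l}(y)),
\end{equation*}
and, in particular, the boundary observation expands as
\begin{equation*}
q_k(1,t)=\sum_{l\in\N}\alpha_{k,l}\,q_{k,l}(1)\,e^{\lambda_{k,l}(T-t)}.
\end{equation*}
By the orthogonality of $\{(\phi_{k,l},\xi_{k,l})\}_{l\in\N}$ in $(L^2(0,1))^2$ (part 3 of Lemma \ref{lem+ev}), the quantity to estimate from above is
\begin{equation*}
\|(\phi_k,\xi_k)(\cdot,0)\|_{L^2}^2=\sum_{l\in\N}|\alpha_{k,l}|^2\, e^{2\lambda_{k,l}T}\,\|(\phi_{k,l},\xi_{k,l})\|_{L^2}^2.
\end{equation*}

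Next, I would invoke a Müntz-Szász / Fattorini-Russell construction to produce a family $\{\theta_{k,l}\}_{l\in\N}\subset L^2(0,T_0)$ satisfying the biorthogonality $\int_0^{T_0}\theta_{k,l}(t)\,e^{\lambda_{k,j}(T-t)}\,dt=\delta_{lj}$. Three hypotheses available from part 5 of Lemma \ref{lemevprop} make this possible uniformly in $k$: the eigenvalues are simple, they obey the uniform spectral gap $\inf_{k,l}(\lambda_{k,l}-\lambda_{k,l+1})>C>0$, and $\sum_{l\in\N}1/|\lambda_{k,l}|\leq C_1\sum_{l>l_0}1/l^2<\infty$ with $C_1$ independent of $k$. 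The associated norm estimate can then be taken in the form $\|\theta_{k,l}\|_{L^2(0,T_0)}\leq \mathcal{K}(T_0,\eta)\,e^{\eta|\lambda_{k,l}|}$ for any preassigned $\eta>0$.

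Testing the expansion of $q_k(1,\cdot)$ against $\theta_{k,l}$ and applying Cauchy-Schwarz yields $|\alpha_{k,l}|^2|q_{k,l}(1)|^2\leq \|\theta_{k,l}\|^2\int_0^{T_0}|q_k(1,t)|^2\,dt$. Multiplying by $e^{2\lambda_{k,l}T}\|(\phi_{k,l},\xi_{k,l})\|^2/|q_{k,l}(1)|^2$ and summing in $l$ gives
\begin{equation*}
\|(\phi_k,\xi_k)(\cdot,0)\|_{L^2}^2\leq \mathcal{C}_k(T_0)\int_0^{T_0}|q_k(1,t)|^2\,dt,
\end{equation*}
where
\begin{equation*}
\mathcal{C}_k(T_0)=\sum_{l\in\N}\frac{\|\theta_{k,l}\|_{L^2(0,T_0)}^2\,\|(\phi_{k,l},\xi_{k,l})\|_{L^2}^2\, e^{2\lambda_{k,l}T}}{|q_{k,l}(1)|^2}.
\end{equation*}

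The main obstacle, and the heart of the lemma, is to bound $\mathcal{C}_k(T_0)$ uniformly in $k$. Inserting the lower bound $|q_{k,l}(1)|\geq M k^2 e^{|k|}|\lambda_{k,l}||\mu_{k,l}|$ from Lemma \ref{lemestcntrl}, an upper estimate on $\|(\phi_{k,l},\xi_{k,l})\|_{L^2}$ derivable from the explicit formulas \eqref{eqefxi}--\eqref{eqcoeffef} (which also carries a single factor $e^{|k|}$), and the biorthogonal norm bound above, the dangerous factors $e^{2|k|}$ in numerator and denominator cancel. Since $\lambda_{k,l}<-\nu k^2$ and $T>T_0$, choosing $\eta<T$ turns the combined exponential weight into $e^{-2(T-\eta)|\lambda_{k,l}|}$, which decays exponentially both in $k$ (via $|\lambda_{k,l}|\geq \nu k^2$) and in $l$ (via the uniform gap). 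The summability $\sum_l 1/|\lambda_{k,l}|<\infty$ from Lemma \ref{lemevprop}(5) then absorbs the residual algebraic growth in $l,|\mu_{k,l}|$, yielding a bound on $\mathcal{C}_k(T_0)$ that is uniform in $k\in \frac{2\pi}{L}\Z\setminus\{0\}$, and the desired $C(T_0)$ is $1/\sup_k\mathcal{C}_k(T_0)$.
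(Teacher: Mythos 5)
Your proposal is correct and follows essentially the same route as the paper: eigenfunction expansion of the mode-$k$ adjoint solution, a M\"untz--Sz\'asz/biorthogonal estimate whose constants are uniform in $k$ thanks to the gap and summability properties of Lemma \ref{lemevprop}(5), and cancellation of the $e^{2|k|}$ factors between the eigenfunction norm bound and the lower bound of Lemma \ref{lemestcntrl} (the paper packages the biorthogonal step as a direct citation of Proposition 3.2 of L\'opez--Zuazua applied on $(T_0/2,T_0)$). The only slip is attributing the bound $Mk^2e^{|k|}|\lambda_{k,l}||\mu_{k,l}|$ to $|q_{k,l}(1)|$ rather than to $|\xi_{k,l}'''(1)|$; since $q_{k,l}(1)=-\tfrac{\nu}{k^2}\xi_{k,l}'''(1)$ the correct lower bound for $|q_{k,l}(1)|$ loses the $k^2$ but keeps the essential $e^{|k|}|\lambda_{k,l}||\mu_{k,l}|$, so the argument is unaffected.
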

\begin{proof}
From \eqref{eqadjsysk}, we can derive that 
$$ \phi_k(y,t)= \sum_{l\in \N} \alpha_{k,l}e^{\lambda_{k,l}(T-t)}\phi_{k,l}(y), \quad 
\xi_k(y,t)= \sum_{l\in \N} \alpha_{k,l}e^{\lambda_{k,l}(T-t)}\xi_{k,l}(y). 
$$
Now using the representation of the eigenfunctions $\{\phi_{k,l}, \xi_{k,l}\}_{l\in \N}$ from Lemma \ref{lem+ev}, 
we obtain that there exists a positive constant $M$ independent of $k$ such that 
\begin{equation}\label{eqobsest1} \|\phi_k(\cdot,0), \xi_k(\cdot, 0)\|^2_{L^2(0,1)} \le M \sum_{l\in \N} |\alpha_{k,l}|^2
e^{2\lambda_{k,l} T} |\lambda_{k,l}|^2e^{2|k|}|\mu_{k,l}|^2.\end{equation}
From $\eqref{eqadjsysk}_1$, using the boundary condition $\phi_k(1)=0$, we obtain
$$ikq_k(1)=-\nu\phi_k''(1),$$
and by combining this with the incompressibility condition, we find
$$q_k(1)=-{\nu\over k^2}\xi_k'''(1).$$
Hence, we get that for all $k\in \frac{2\pi}{L}\Z-\{0\}$, 
$$ q_k(1,t)=-\frac{\nu}{k^2} \sum_{l}\alpha_{k,l} e^{\lambda_{k,l}(T-t)}\xi^{'''}_{k,l}(1),$$ 
and from the expression of $2.$ in Lemma \ref{lemef}, we have that 
$$\xi^{'''}_{k,l}(1) = -i4k \frac{\lambda_{k,l}}{\nu} \Big[ \widetilde{\mu}_{k,l}\Big\{ \frac{2k}{\sinh(k)}(1-\cosh(k)cos(\widetilde{\mu}_{k,l}))+ k \sinh(k)\Big\}+k^2\sin(\widetilde{\mu}_{k,l})\Big]
$$ and from Lemma \ref{lemestcntrl}, we have \eqref{eqobsest2}, i.e.,
\begin{equation*}|\xi^{'''}_{k,l}(1)|\ge M k^2e^{|k|} |\lambda_{k,l}||\mu_{k,l}|, \quad \forall \, k\in \frac{2\pi}{L}\Z-\{0\}, \quad \forall \, l\in \N,
\end{equation*}
for some positive constant $M$.
Since, 
by Lemma \ref{lemevprop}, we have that
$$\mathrm{infimum}_{k,l}\,\{ \lambda_{k,l}-\lambda_{k,l+1}\}>C, \quad \sum_{l>l_0}\frac{1}{(-\lambda_{k,l})}< \sum_{l>l_0}\frac{1}{l^2}<\infty,$$
where $C$ and $l_0$ are independent of $k$, 
by using a M\"untz-Sz\'asz theorem (see \cite{Lopez}, Proposition 3.2), we can show that 
$$ \int_0^{T_0} |\frac{\nu}{k^2} \sum_{l}\alpha_{k,l} e^{\lambda_{k,l}(T-t)}\xi^{'''}_{k,l}(1)|^2\, dt\ge
\int_{T_0/2}^{T_0} |\frac{\nu}{k^2} \sum_{l}\alpha_{k,l} e^{\lambda_{k,l}(T-t)}\xi^{'''}_{k,l}(1)|^2\, dt$$
$$\ge C(T_0) \sum_{l\in \N} |\alpha_{k,l}|^2/(k^4|\lambda_{k,l}|) |\xi^{'''}_{k,l}(1)|^2e^{2\lambda_{k,l}(T-T_0/2)}.
$$ 
From \eqref{eqobsest1} and \eqref{eqobsest2}, it follows that 
$$ \int_0^{T_0} |q_k(1,t)|^2\, dt \ge C(T_0) \|\phi_k(\cdot,0), \xi_k(\cdot, 0)\|^2_{L^2(0,1)},$$
and hence we get \eqref{eqobsk}, since $\exp(-\lambda_{k,l}T_0)/|\lambda_{k,l}|>T_0$.
\end{proof}

\textbf{Proof of Proposition \ref{propobs}}:
\begin{proof}
Note that 
$$ \int_0^{T_0} \|q(\cdot,1,t)\|^2_{\dot{L}^2(0,L)}\, dt = L \sum_{k\in \frac{2\pi}{L}\Z-\{0\}}\int_0^{T_0} |q_k(1,t)|^2\, dt, $$
and 
$$ \|(\phi, \xi)(\cdot,\cdot,0)\|^2_{\mathbf{V}_{\sharp,n}^0(\Omega)}= L \sum_{k\in \frac{2\pi}{L}\Z-\{0\}}\|\phi_k(\cdot,0), \xi_k(\cdot, 0)\|^2_{L^2(0,1)}.$$ 
The result now follows from Lemma \ref{lemobsk}.
\end{proof}

\medskip
Received xxxx 20xx; revised xxxx 20xx.
\medskip

\begin{thebibliography}{99}

\bibitem{Krstic1}
\newblock O.M. Aamo, M. Krstic and T.R. Bewley,
\newblock Control of mixing by boundary feedback in
 2D-channel,  
 \newblock \emph{Automatica J. IFAC}, \textbf{39} (2003), no. 9, 1597--1606, doi: 10.1016/S0005-1098(03)00140-7.
 
 \bibitem{Krstic2}
 \newblock A. Balogh, W.-J. Liu and M. Krstic, 
 \newblock Stability enhancement by boundary control in
 2D channel flow, 
 \newblock \emph{IEEE Trans. Automat. Control}, \textbf{46} (2001), no. 11, 1696--1711, doi: 10.1109/9.964681.

\bibitem{Barbu0}
\newblock V. Barbu, 
\newblock Stabilization of a plane periodic channel flow by noise wall normal controllers, 
\newblock \emph{Systems Control Lett.},
\textbf{59} (2010), no. 10, 608--614, doi:10.1016/j.sysconle.2010.07.005

\bibitem{Barbu1}
\newblock V. Barbu, 
\newblock Stabilization of a plane channel flow by wall normal controllers,
\newblock \emph{Nonlinear Anal.,} {\textbf 67} (2007), no. 9, 2573--2588, doi: 10.1016/j.na.2006.09.024.

\bibitem{Barbu2} 
\newblock V. Barbu, 
\newblock \emph{Stabilization of Navier-Stokes flows,} 
\newblock Communications and Control Engineering Series. Springer, London, 2011, doi: 10.1007/978-0-85729-043-4.

\bibitem{chowdhury} 
\newblock S. Chowdhury and
S. Ervedoza,
\newblock Open loop stabilization of incompressible Navier-Stokes equations in a $2$d channel using power series expansion,
\newblock submitted to \emph{J. Europ. Math. Soc.},
\newblock https://www.math.univ-toulouse.fr/~ervedoza/Publis/Chowdhury-Erv.pdf.

 \bibitem{coron1}
 \newblock J.-M. Coron, 
 \newblock {\emph Control and Nonlinearity},
 \newblock Math. Surveys Monogr. 136, American Mathematical
 Society, Providence, RI, 2007, doi: 10.1090/surv/136.
 
 \bibitem{coron2} 
 \newblock J.-M. Coron and E. Cr\'{e}peau, 
 \newblock Exact boundary controllability of a nonlinear KdV equation
 with critical lengths,
 \newblock \emph{J. Eur. Math. Soc.,} \textbf{6} (2004), no. 3, 367--398, doi: 10.4171/JEMS/13.


\bibitem{coron5}
\newblock J.-M. Coron and S. Guerrero, 
\newblock Local null controllability of the two-dimensional Navier-Stokes
system in the torus with a control force having a vanishing component, 
\newblock \emph{J. Math. Pures Appl.(9)},
\textbf{92} (2009), no. 5, 528--545, doi: 10.1016/j.matpur.2009.05.015.

\bibitem{coron6}
\newblock J.-M. Coron and P. Lissy,
\newblock Local null controllability of the three-dimensional Navier-Stokes system with a distributed control having 
two vanishing components, 
\newblock \emph{Invent. Math.,} \textbf{198} (2014), no. 3,  833--880, doi: 10.1007/s00222-014-0512-5.


\bibitem{Lopez}
\newblock A. Lopez and E. Zuazua, 
\newblock Uniform null-controllability for the one-dimensional heat equation with rapidly oscillating periodic density,
\newblock \emph{Ann. Inst. H. Poincar\'{e} Anal. Non Lin\'{e}aire}, \textbf{19} (2002), no. 5, 543--580, doi: 10.1016/S0294-1449(01)00092-0. 

\bibitem{Munteanu} 
\newblock I. Munteanu, 
\newblock Normal feedback stabilization of periodic flows
in a two-dimensional channel, 
\newblock \emph{J Optim. Theory Appl.}, \textbf{152} (2012), no. 2, 413--438, doi: 10.1007/s10957-011-9910-7.

\bibitem{Munteanu2} 
\newblock I. Munteanu, 
\newblock Tangential feedback stabilization of periodic flows in a 2-D channel,
\newblock \emph{Differ. Integral Equ.}, \textbf{24} (2011), no. 5-6, 469--494.


\bibitem{Raymond} 
\newblock J.-P. Raymond, 
\newblock Stokes and Navier-Stokes equations with nonhomogeneous boundary conditions,
\newblock \emph{Ann. Inst. H. Poincar\'{e} Anal. Non Lin\'{e}aire}, \textbf{24} (2007), no. 6, 921--951, doi: 10.1016/j.anihpc.2006.06.008.

\bibitem{Raymond2}
\newblock J.-P. Raymond, 
\newblock Feedback boundary stabilization of the two-dimensional Navier-Stokes equations,
\newblock \emph{SIAM J. Control Optim.}, \textbf{45} (2006), no. 3, 790--828, doi:10.1137/050628726.


\bibitem{Tri}
\newblock R. Triggiani, 
\newblock Stability enhancement of a 2-D linear Navier-Stokes channel
flow by a 2-D wall-normal boundary controller, 
\newblock \emph{Discrete Contin. Dyn. Syst.
Ser. B}, \textbf{8} (2007), no. 2, 279--314, doi: 10.3934/dcdsb.2007.8.279.

\bibitem{Vaz1}
\newblock R. V\'{a}zquez, E. Tr\'{e}lat and J.-M.Coron, 
\newblock Control for fast and stable laminar-to-high-Reynolds-numbers transfer in a 2D Navier-Stokes channel flow, 
\newblock \emph{Discrete Contin. Dyn. Syst. Ser. B}, {\bf 10} (2008), no. 4, 925--956, doi:10.3934/dcdsb.2008.10.925.

\bibitem{Vaz2}
\newblock R. V\'{a}zquez and M. Krstic, 
\newblock A closed-form feedback controller for stabilization of the linearized 2-D Navier-Stokes Poiseuille system,
\newblock \emph{IEEE Trans.
Automat. Control}, \textbf{52} (2007), no. 12, 2298--2312, doi: 10.1109/TAC.2007.910686 .
\end{thebibliography}
\end{document}